\documentclass{article}
\usepackage[english]{babel}
\usepackage[cp850]{inputenc}
\usepackage{amscd}
\usepackage{amssymb}
\usepackage{amsmath}
\usepackage{amsthm}
\usepackage{graphics}

\newtheorem{exa}{Example}
\newtheorem{lem}{Lemma}
\newtheorem{prop}{Proposition}

\newcommand{\al}{\alpha}
\newcommand{\be}{\beta}
\newcommand{\ga}{\gamma}
\newcommand{\de}{\delta}
\newcommand{\la}{\lambda}
\newcommand{\ro}{\rho}

\newcommand{\rt}{\sqrt{3}}

\newcommand{\bal}{\bar\alpha}
\newcommand{\bbe}{\bar\beta}

\title{Some results on the Dunkl--Williams constant}
\author{Javier Alonso and Pedro Mart\'{\i}n}
\date{\today}

\begin{document}

\maketitle


\begin{abstract}
This paper presents a compilation of various formulas for calculating the Dunkl-Williams constant $DW(X)$ of a real normed linear space. The constant $DW_B(X)$ related to Birkhoff orthogonality is also considered. The value of $DW(X)$ is calculated for several two-dimensional spaces. In particular, it is shown that the Dunk-Williams constant for $\ell_2-\ell_1$ is equal to $2\sqrt{2}$, and that it is equal to $8(2-\rt)$ for the two dimensional normed linear space whose unit sphere is a dodecahedron.
\end{abstract}

\vspace*{.5cm}

Let $X$ be a real normed linear space with unit sphere $S_X$. The Dunkl--Williams constant
$DW(X)$ was defined by A. Jim\'{e}nez-Melado, E. Llorens-Fuster and E.M. Mazcu\~{n}\'{a}n-Navarro $\cite{JLM}$ as
$$
DW(X)=\sup\left\{\frac{\|x\|+\|y\|}{\|x-y\|}\left\|\frac{x}{\|x\|}-\frac{y}{\|y\|}\right\|:\, x,y\in X\setminus\{0\},\,x\neq y\right\}.
$$

The origin of this constant lies in a paper by C.F. Dunkl and K.S. Williams $\cite{DW}$, who proved that the inequality
\begin{equation}\label{D_W inequality}
\left\|\frac{x}{\|x\|}-\frac{y}{\|y\|}\right\|\leq \frac{4\|x-y\|}{\|x\|+\|y\|}
\end{equation}
holds for all $x,y\in X\setminus{\{0\}}$. Thus, inequality (\ref{D_W inequality}) says that $DW(X)\leq 4$. Moreover (take $y=-x$), we have that $DW(X)\geq 2$. The same year,  Kirk and Smiley $\cite{DW2}$ proved that $DW(X)=2$ characterizes inner product spaces. This characterization can be improved in the following sense: Y. Fu, H. Xie and  Y. Li \cite{YHY} defined the constant
$$
DW_B(X)=\sup\left\{\frac{\|x\|+\|y\|}{\|x-y\|}\left\|\frac{x}{\|x\|}-\frac{y}{\|y\|}\right\|:\, x,y\in X\setminus\{0\},\,x\perp_B y\right\},
$$
where $x\perp_B y$ means that $x$ is Birkhoff orthogonal to $y$ (i.e. $\|x+\lambda y\|\geq\|x\|$ for all $\lambda\in \mathbb{R}$) and observed  \cite[Corollary 2.10]{YHY} that $DW_B(X)=2$ also characterizes inner product spaces. For the proof, they refer to the book by D. Amir \cite{Am}, who in turn cites the doctoral dissertation \cite{Al1} as the source of the result (which was also published in \cite{Al2}). Since both references are difficult to obtain, the proof is reproduced  here, in Proposition \ref{Characterization}. But first, let us note that $DX(X)$, as well as $DW_B(X)$, can be defined in different ways.

\begin{prop}\label{Equivalences}
Let $X$ be a real normed linear space, and let us consider
\begin{align*}
DW_1(X) & :=\sup\left\{\frac{(\lambda+\mu)\|u+v\|}{\|\lambda u+\mu v\|}: u,v\in S_X,\ 0<\lambda<\mu\right\},\\[1ex]
DW_2(X) & := \sup\left\{\frac{\|u+v\|}{\|\gamma u+(1-\gamma) v\|}: u,v\in S_X,\ 0<\gamma<\frac{1}{2}\right\},\\[1ex]
DW_3(X) & := \sup\left\{\frac{\|u+v\|}{\inf\limits_{0<\gamma<\frac{1}{2}}\|\gamma u+(1-\gamma) v\|}: u,v\in S_X,\ u+v\neq0  \right\},\\[1ex]
DW_4(X) & := \sup\left\{\frac{2\|u+v\|}{\|u+v+\delta (u-v)\|}: u,v\in S_X,\ 0<\delta<1\right\},\\[1ex]
DW_5(X) & := \sup\left\{\frac{(1+t)\|u+v\|}{\|u+tv\|}: u,v\in S_X,\ 0<t<1\right\}.
\end{align*}
Then, $DW(X)=DW_i(X)$, $i=1,\ldots, 5$. 
\end{prop}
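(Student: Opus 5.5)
The whole proposition rests on one change of variables that turns the defining quotient of $DW(X)$ into the quotient of $DW_1(X)$. The other constants then follow by normalizations that are routine but need care with the range of the parameters.

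\textbf{From $DW(X)$ to $DW_1(X)$.} Given $x,y\in X\setminus\{0\}$ with $x\neq y$, I set
$$u=\frac{x}{\|x\|},\qquad v=-\frac{y}{\|y\|},\qquad \lambda=\|x\|,\qquad \mu=\|y\|.$$
Then $u,v\in S_X$, and
$$\frac{x}{\|x\|}-\frac{y}{\|y\|}=u+v,\qquad x-y=\lambda u+\mu v.$$
Hence the quotient in the definition of $DW(X)$ equals
$$\frac{(\lambda+\mu)\|u+v\|}{\|\lambda u+\mu v\|}.$$
The condition $x\neq y$ is exactly $\lambda u+\mu v\neq 0$. Conversely, every triple $(u,v,\lambda,\mu)$ comes from such a pair $(x,y)$.

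When $\lambda<\mu$, the vector $\lambda u+\mu v$ is automatically nonzero, since $\|\lambda u+\mu v\|\ge \mu-\lambda>0$. The case $\lambda>\mu$ reduces to $\lambda<\mu$ by exchanging the roles of $u$ and $v$. The case $\lambda=\mu$ gives the value $2$ when $u+v\neq0$; the case $u+v=0$ is excluded because then $x=y$. The value $2$ is already attained inside $DW_1(X)$ by taking $v=u$, which gives $2(\lambda+\mu)/(\lambda+\mu)=2$. Therefore $DW(X)=DW_1(X)$.

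\textbf{$DW_2(X)$ and $DW_5(X)$.} Both follow from positive homogeneity of the quotient in $(\lambda,\mu)$.
\begin{itemize}
\item Dividing numerator and denominator by $\lambda+\mu$ and putting $\gamma=\lambda/(\lambda+\mu)$ gives $DW_2(X)$. Here $\gamma$ ranges over exactly $(0,\tfrac12)$ as $0<\lambda<\mu$ vary.
\item Dividing by $\mu$ and putting $t=\lambda/\mu\in(0,1)$ gives $(1+t)\|u+v\|/\|tu+v\|$. Renaming $u\leftrightarrow v$ gives $DW_5(X)$.
\end{itemize}

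\textbf{$DW_4(X)$.} I write
$$\gamma u+(1-\gamma)v=\tfrac12\bigl(u+v-\delta(u-v)\bigr),\qquad \delta=1-2\gamma\in(0,1).$$
Swapping $u$ and $v$ changes the sign in front of $\delta$, which yields exactly the expression in $DW_4(X)$. The map $\gamma\mapsto\delta$ is a bijection of $(0,\tfrac12)$ onto $(0,1)$.

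\textbf{$DW_3(X)$ and the main obstacle.} For fixed $u,v$ with $u+v\neq0$, the supremum over $\gamma$ of $\|u+v\|/\|\gamma u+(1-\gamma)v\|$ equals $\|u+v\|$ divided by the infimum. Taking the supremum over $u,v$ then gives $DW_3(X)=DW_2(X)$; pairs with $u+v=0$ contribute $0$ to $DW_2(X)$ and can be dropped.

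The delicate point is that this step is only valid if the infimum is strictly positive. I will argue as follows.
\begin{itemize}
\item The function $\gamma\mapsto\|\gamma u+(1-\gamma)v\|$ is continuous on $[0,\tfrac12]$.
\item At $\gamma=0$ it takes the value $1$, and at $\gamma=\tfrac12$ it takes the value $\tfrac12\|u+v\|>0$.
\item A zero at some $\gamma\in(0,\tfrac12)$ would force $\gamma=\|\gamma u\|=\|(1-\gamma)v\|=1-\gamma$, that is, $\gamma=\tfrac12$, which is a contradiction.
\end{itemize}
So the infimum is attained on the compact interval and is positive. The same boundary bookkeeping (nonvanishing denominators, and the excluded cases $\lambda=\mu$ and $u+v=0$) must be checked in each equivalence above, and I expect this to be the only real care required.
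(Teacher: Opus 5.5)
Your proposal is correct and follows essentially the paper's route. You use the substitution $u=x/\|x\|$, $v=-y/\|y\|$ with $\lambda=\|x\|$, $\mu=\|y\|$, where the paper's forward direction takes $\lambda=1/\|y\|$, $\mu=1/\|x\|$. You then apply the same reparametrizations, obtaining $DW_5$ from $DW_1$ via $t=\lambda/\mu$ rather than from $DW_4$, and you prove the infimum in $DW_3$ is positive by compactness on $[0,\tfrac12]$ instead of the paper's distance-to-a-line argument. Your explicit handling of the case $\|x\|=\|y\|$, where the quotient equals $2\le DW_1(X)$, also fills in a case that the paper's ``without loss of generality $\|x\|<\|y\|$'' passes over silently.
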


\begin{proof}
$DW(X)=DW_1(X)$:
Let $x,y\in X\setminus\{0\}$, $x\neq y$, and assume, without loss of generality, that $\|x\|<\|y\|$. Let $u=x/\|x\|$, $v=-y/\|y\|$, $\lambda=1/\|y\|$ and $\mu=1/\|x\|$. Then, $u,v\in S_X$, $0<\lambda<\mu$ and
$$
\frac{\|x\|+\|y\|}{\|x-y\|}\left\|\frac{x}{\|x\|}-\frac{y}{\|y\|}\right\| = \frac{(\lambda+\mu)\|u+v\|}{\|\lambda u+\mu v\|} \leq DW_1(X).
$$
So we get that $DW(X)\leq DW_1(X)$. On the other hand, let $u,v\in S_X$ and $0<\lambda<\mu$. By taking $x=\lambda u$ and $y=-\mu v$, we have that $x,y\in X\setminus\{0\}$, $x\neq y$, and
$$
\frac{(\lambda+\mu)\|u+v\|}{\|\lambda u+\mu v\|}=\frac{\|x\|+\|y\|}{\|x-y\|}\left\|\frac{x}{\|x\|}-\frac{y}{\|y\|}\right\|\leq DW(X),
$$
and we get that $DW_1(X)\leq DW(X)$.

\smallskip

$DW_1(X)=DW_2(X)$: Let $u,v\in S_X$ and $0<\lambda<\mu$. Take $\gamma=\lambda/(\lambda+\mu)$. Then, $0<\gamma<1/2$, and
$$
\frac{(\lambda+\mu)\|u+v\|}{\|\lambda u+\mu v\|}=\frac{\|u+v\|}{\|\gamma u+(1-\gamma)v\|}\leq DW_2(X).
$$
Therefore, $DW_1(X)\leq DW_2(X)$. On the other hand, let $u,v\in S_X$, and $0<\gamma<1/2$. Let $\lambda=\gamma/(1-\gamma)$ and $\mu=1$. Then $0<\lambda<\mu$, and
$$
\frac{\|u+v\|}{\|\gamma u+(1-\gamma)v\|}=\frac{(\lambda+\mu)\|u+v\|}{\|\lambda u+\mu v\|}\leq DW_1(X),
$$
and we get that $DW_2(X)\leq DW_1(X)$.

\smallskip

$DW_2(X)=DW_3(X)$: Let's first see that if $u+v\neq0$, then $\inf_{0<\gamma<\frac{1}{2}}\|\gamma u+(1-\gamma) v\|>0$. If $u=v$, then $\inf_{0<\gamma<\frac{1}{2}}\|\gamma u+(1-\gamma) v\|=1$. Assume, on the contrary, that $u\neq v$. Then, the points $u$, $-u$ and $v$ form a non-degenerate triangle, which implies that the distance from the origin to the line through $u$ and $v$ is strictly positive, and then  $\inf_{0<\gamma<\frac{1}{2}}\|\gamma u+(1-\gamma) v\|>0$.

Now, let $\bar u,\bar v\in S_X$, $\bar u+\bar v\neq 0$,  and $\bar \gamma\in (0,\frac{1}{2})$. Then,
$$
\frac{\|\bar u+\bar v\|}{\|\bar\gamma \bar u+(1-\bar\gamma)\bar v\|}\leq \frac{\|\bar u+\bar v\|}{\inf\limits_{0<\gamma<\frac{1}{2}}\|\gamma \bar u+(1-\gamma) \bar v\|}\leq DW_3(X).
$$
Taking the supreme on the left, we obtain $DW_2(X)\leq DW_3(X)$. On the other hand, let $\bar u,\bar v\in S_X$, $\bar u+\bar v\neq 0$. For any $\varepsilon>0$, there exists $\gamma(\bar u,\bar v,\varepsilon)\in (0,\frac{1}{2})$ such that
$$
\left\|\gamma(\bar u,\bar v,\varepsilon) \bar u+\big(1-\gamma(\bar u,\bar v,\varepsilon)\big)\bar v\right\|<\inf\limits_{0<\gamma<\frac{1}{2}}\|\gamma\bar u+(1-\gamma)\bar v\|+\varepsilon.
$$
Then,
$$
DW_2(X)\geq\frac{\|\bar u+\bar v\|}{\left\|\gamma(\bar u,\bar v,\varepsilon) \bar u+\big(1-\gamma(\bar u,\bar v,\varepsilon)\big)\bar v\right\|}>
\frac{\|\bar u+\bar v\|}{\inf\limits_{0<\gamma<\frac{1}{2}}\|\gamma \bar u+(1-\gamma)\bar v\|+\varepsilon}.
$$
Letting epsilon tend to zero, and taking supreme on the right, we get $DW_2(X)\geq DW_3(X)$.

\smallskip

$DW_2(X)=DW_4(X)$: Let $u,v\in S_X$ and $0<\gamma<1/2$. Take $\delta=1-2\gamma$. Then, $0<\delta<1$, and
$$
\frac{\|u+v\|}{\|\gamma u+(1-\gamma)v\|}=\frac{2\|v+u\|}{\|v+u+\delta(v-u)\|}\leq DW_4(X),
$$
and we get $DW_2(X)\leq DW_4(X)$. On the other hand, let  $u,v\in S_X$ and $0<\delta<1$. By taking $\gamma=(1-\delta)/2$, we have that $0<\gamma<1/2$, and
$$
\frac{2\|u+v\|}{\|u+v+\delta(u-v)\|}=\frac{\|v+u\|}{\|\gamma v+(1-\gamma)u\|}\leq DW_2(X),
$$
and we get that $DW_4(X)\leq DW_2(X)$.

\smallskip

$DW_4(X)=DW_5(X)$: Let $u,v\in S_X$ and $0<\delta<1$. Take $t=(1-\delta)/(1+\delta)$. Then, $0<t<1$, and
$$
\frac{2\|u+v\|}{\|u+v+\delta(u-v)\|}=\frac{(1+t)\|u+v\|}{\|u+t v\|}\leq DW_5(X),
$$
and we get that $DW_4(X)\leq DW_5(X)$. In the same way, it is prove that $DW_5(X)\leq DW_4(X)$.
\end{proof}

The following proposition follows easily from the proof of Proposition~\ref{Equivalences} by taking into account that Birkhoff orthogonality is homogeneous, but (in general) not symmetric (see, for example, \cite{AlMaWu1} and \cite{AlMaWu2}).

\begin{prop}\label{Equivalences2}
Let $X$ be a real normed linear space, and let us consider
\begin{align*}
DW_{B1}(X) & :=\sup\left\{\frac{(\lambda+\mu)\|u+v\|}{\|\lambda u+\mu v\|}: u,v\in S_X, u\perp_B v,\ \lambda>0,\ \mu>0\right\},\\[1ex]
DW_{B2}(X) & := \sup\left\{\frac{\|u+v\|}{\|\gamma u+(1-\gamma) v\|}: u,v\in S_X, u\perp_B v,\ 0<\gamma<1\right\},\\[1ex]
DW_{B3}(X) & := \sup\left\{\frac{\|u+v\|}{\inf\limits_{0<\gamma<1}\|\gamma u+(1-\gamma) v\|}: u,v\in S_X,\ u\perp_B v  \right\},\\[1ex]
DW_{B4}(X) & := \sup\left\{\frac{2\|u+v\|}{\|u+v+\delta (u-v)\|}: u,v\in S_X, u\perp_B v,\ -1<\delta<1\right\},\\[1ex]
DW_{B5}(X) & := \sup\left\{\frac{(1+t)\|u+v\|}{\|u+tv\|}: u,v\in S_X,\ u\perp_B v,t>0\right\}.
\end{align*}
Then, $DW_B(X)=DW_{Bi}(X)$, $i=1,\ldots, 5$.
\end{prop}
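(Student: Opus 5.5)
The plan is to follow the chain of equalities in the proof of Proposition~\ref{Equivalences}: $DW_B=DW_{B1}=DW_{B2}$, $DW_{B2}=DW_{B3}$, $DW_{B2}=DW_{B4}=DW_{B5}$. In each step I will use the same substitutions, with one constraint. Since Birkhoff orthogonality is not symmetric, I must never exchange the roles of $u$ and $v$. This is why the parameter ranges become $\lambda,\mu>0$ with no ordering, $\gamma\in(0,1)$, $\delta\in(-1,1)$ and $t>0$: we can no longer assume without loss of generality that $\|x\|<\|y\|$. The tool I rely on throughout is homogeneity: $x\perp_B y$ implies $ax\perp_B by$ for all real $a,b$.

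\emph{$DW_B=DW_{B1}$.} First note that if $x,y\neq0$ and $x\perp_B y$, then $x\neq y$: taking $\lambda=-1$ in the definition gives $0\geq\|x\|$. So the quotient defining $DW_B$ is always well defined. Given $x\perp_B y$, I set $u=x/\|x\|$, $v=-y/\|y\|$, $\lambda=1/\|y\|$, $\mu=1/\|x\|$. Then $u\perp_B v$ by homogeneity, and $\lambda u+\mu v=(x-y)/(\|x\|\|y\|)$, so the two quotients coincide. Conversely, given $u\perp_B v$ and $\lambda,\mu>0$, I take $x=\lambda u$ and $y=-\mu v$; again $x\perp_B y$ by homogeneity, and the same identity applies.

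\emph{$DW_{B1}=DW_{B2}$.} I use $\gamma=\lambda/(\lambda+\mu)\in(0,1)$. For the converse I use $\lambda=\gamma$, $\mu=1-\gamma$. Both bijections keep $u,v$ in place.

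\emph{$DW_{B2}=DW_{B3}$.} The only new point is that $\inf_{0<\gamma<1}\|\gamma u+(1-\gamma)v\|>0$. Here $u\perp_B v$ with $u,v\in S_X$ forces $v\neq\pm u$, because $u\perp_B(\pm u)$ fails at $\lambda=\mp1$. Hence $u$ and $v$ are linearly independent, and the compact segment $[u,v]$ stays at positive distance from $0$. In particular $u+v\neq0$ automatically. After this, the $\varepsilon$-approximation argument from Proposition~\ref{Equivalences} carries over verbatim.

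\emph{$DW_{B2}=DW_{B4}$ and $DW_{B4}=DW_{B5}$.} Here I need substitutions that do not swap $u$ and $v$, unlike the earlier proof. I will use
\[
\frac{1+\delta}{2}\,u+\frac{1-\delta}{2}\,v=\frac{u+v+\delta(u-v)}{2},
\]
so $\gamma=(1+\delta)/2$ gives a bijection between $(0,1)$ and $(-1,1)$. Next, $t=(1-\delta)/(1+\delta)$ is a bijection between $(-1,1)$ and $(0,\infty)$. It satisfies $1+t=2/(1+\delta)$ and $u+tv=(u+v+\delta(u-v))/(1+\delta)$, so the quotients agree exactly.

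I expect the main obstacle to be this bookkeeping of the order of $u$ and $v$. Every substitution must preserve $u\perp_B v$ rather than $v\perp_B u$. That in turn forces the enlarged parameter ranges, and I must check that each map is a bijection onto the full range, not just the half-range used in Proposition~\ref{Equivalences}. The positivity of the infimum is the only genuinely non-formal point, and it is handled by the linear independence observation above.
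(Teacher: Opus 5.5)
Your proposal is correct and follows the paper's approach. The paper only remarks that the result follows from the proof of Proposition~\ref{Equivalences} once one uses that Birkhoff orthogonality is homogeneous but not symmetric; you carry this out by reusing the same substitutions without ever interchanging $u$ and $v$ (e.g.\ $\gamma=(1+\delta)/2$ instead of the swap used there), which is what forces the enlarged parameter ranges. Your side checks ($x\neq y$ holds automatically, and $u,v$ are linearly independent, so the infimum in $DW_{B3}$ is positive) supply details the paper leaves implicit.
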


\begin{prop}\label{Characterization}
A real normed linear space $X$ is an inner product space if, and only if, the property
\begin{equation}\label{D_W inequality2}
x,y\in X\setminus{\{0\}},\ x\perp_B y\quad \Rightarrow\quad \left\|\frac{x}{\|x\|}-\frac{y}{\|y\|}\right\|\leq \frac{2\|x-y\|}{\|x\|+\|y\|}
\end{equation}
holds. That is, $X$ is an inner product space if and only if $DW_B(X)=2$.
\end{prop}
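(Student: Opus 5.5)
The forward direction is a computation. If $X$ is an inner product space and $x\perp_B y$, then $\langle x,y\rangle=0$, since Birkhoff orthogonality coincides with usual orthogonality there. Put $a=\|x\|$ and $b=\|y\|$. Then
$$\left\|\frac{x}{a}-\frac{y}{b}\right\|^2=2,\qquad \|x-y\|^2=a^2+b^2,$$
so the desired inequality reads $\sqrt2\,(a+b)\le 2\sqrt{a^2+b^2}$, i.e. $(a+b)^2\le 2(a^2+b^2)$, which always holds. Hence $DW_B(X)=2$; the reverse bound $DW_B(X)\geq 2$ should follow from taking $a=b$, where equality holds.

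For the converse I would use the form $DW_{B5}$ from Proposition~\ref{Equivalences2}. The hypothesis then says that for all $u,v\in S_X$ with $u\perp_B v$ and all $t>0$,
$$(1+t)\|u+v\|\le 2\|u+tv\|.$$
My plan is to show that this forces Birkhoff orthogonality to be symmetric, or to yield some other classical characterization of inner product spaces. Since the property only involves two-dimensional subspaces, I would reduce to $\dim X=2$ and then invoke Day--James: if $\perp_B$ is symmetric in every two-dimensional subspace (with $\dim X\ge 3$), or, in dimension two, if the space satisfies the appropriate Radon-type condition, then $X$ is an inner product space.

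A cleaner route is to exploit $t\to 0^+$ and $t\to\infty$. As $t\to0^+$ the inequality gives only $\|u+v\|\le 2$, which is trivial. As $t\to\infty$, dividing by $t$ gives $\|u+v\|\le 2$ again. So the content must come from intermediate $t$, and from comparing with the reverse orthogonality. Setting $t=1$ gives $2\|u+v\|\le 2\|u+v\|$, which is also trivial. I would therefore differentiate at $t=1$. The function $f(t)=2\|u+tv\|-(1+t)\|u+v\|$ is $\ge0$ and vanishes at $t=1$, so $t=1$ is a minimum. The one-sided derivatives then give
$$2\,\|u+v\|'_{+}(v)\ge \|u+v\|\ge 2\,\|u+v\|'_{-}(v),$$
where $\|\cdot\|'_{\pm}$ denote the one-sided Gateaux derivatives of the norm at $u+v$ in direction $v$. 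This means there is a support functional $g$ at $w=(u+v)/\|u+v\|$ with $g(v)=\tfrac12\|u+v\|$; using $g(w)=1$ we get $g(u)=g(v)$, hence $w\perp_B (u-v)$. In words: whenever $u\perp_B v$ with $u,v\in S_X$, the midpoint direction of $u$ and $v$ is Birkhoff orthogonal to $u-v$.

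The final step is to show that this property characterizes inner product spaces. I expect it to be the main obstacle. I would try to deduce from it that $\perp_B$ is symmetric, or that the unit circle of each two-dimensional subspace is an ellipse. One possible tool is a known characterization: "$x\perp_B y$ implies $(x+y)\perp_B(x-y)$ for unit $x,y$" is a condition of the kind appearing in Alonso's thesis. Failing that, I would argue directly in the plane. Parametrize $S_X$, and for each $u$ pick $v$ with $u\perp_B v$; the condition then says the tangent at the normalized $u+v$ is parallel to $u-v$. Iterating this halving of the arc between conjugate directions determines the circle from a dense set of data, and comparing with the affine-regular ellipse through $\pm u,\pm v$ with the same tangents should force $S_X$ to coincide with that ellipse. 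Controlling this iteration, including the non-smooth case where $v$ is not unique, is the delicate part.
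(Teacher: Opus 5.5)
Your reduction is correct and reaches exactly the property the paper uses: if $u,v\in S_X$ and $u\perp_B v$, then $u+v\perp_B u-v$. Here $u+v\neq 0$ automatically, since $u\not\perp_B -u$.

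Your route there is valid but heavier than needed. You take one-sided derivatives of $t\mapsto 2\|u+tv\|-(1+t)\|u+v\|$ at its interior minimum $t=1$, and then use the fact that the values $g(v)$, over support functionals $g$ at $u+v$, fill the interval between the one-sided directional derivatives. The paper does it more simply. With $t=(1-\de)/(1+\de)$, your hypothesis $(1+t)\|u+v\|\le 2\|u+tv\|$ for $t>0$ is literally $\|u+v\|\le\|u+v+\de(u-v)\|$ for $-1<\de<1$ (the $DW_{B4}$ form). Convexity of $\de\mapsto\|u+v+\de(u-v)\|$ then turns this local minimum at $\de=0$ into a global one. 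That is exactly $u+v\perp_B u-v$, with no subdifferential calculus needed.

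The gap is the final step, which you leave open. The paper closes it by citing Baronti's theorem: the implication $u,v\in S_X,\ u\perp_B v\Rightarrow u+v\perp_B u-v$ characterizes inner product spaces. You mention such a result only tentatively, and the alternatives you propose do not work as stated.

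\begin{itemize}
\item Symmetry route: after reducing to $\dim X=2$ you cannot use Day--James, which needs $\dim X\ge 3$. In the plane, symmetry of $\perp_B$ only makes $X$ a Radon plane, and Radon planes need not be Euclidean. The hexagonal norm of Example~\ref{hexagon} is a counterexample.
\item Arc-halving iteration: this is a heuristic. You give no argument that the generated directions are dense, or that the tangent data force an ellipse, particularly in the non-smooth case.
\end{itemize}

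As written, the converse is therefore incomplete. It becomes the paper's proof once you invoke Baronti's characterization at that point.
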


\begin{proof}
It is clear that if $X$ is an inner product space, then the property (\ref{D_W inequality2}) holds. Conversely, assume that $DW_B(X)=2$. By Proposition \ref{Equivalences2}, we have that if $u,v\in S_X$, $u\perp_B v$ and $-1<\delta<1$, then $\|u+v\|\leq \|u+v+\delta(u-v)\|$. Since for any $x,y\in X$, the function $\delta\in \mathbb{R}\mapsto \|x+\delta y\|$ is convex, we have that $\|u+v\|\leq \|u+v+\delta(u-v)\|$ for all $\delta\in \mathbb{R}$. This means that the property
$$
u,v\in S_X,\ u\perp_B v\quad \Rightarrow \quad u+v\perp_B u-v
$$
holds. It was probed by M. Baronti \cite{Ba} that this property characterizes inner product spaces.
\end{proof}

\section*{Some examples}

Next, we will calculate $DW(X)$ for some spaces that frequently appear in the literature. Given the tedious nature of these computations, the use of a computer algebra system (e.g., Mathematica or Maxima) is recommended for verification.

To simplify the calculations, in Examples \ref{hexagon} and \ref{l2-l1} we will consider $DW(X)$ in the following form (recall Proposition \ref{Equivalences}):
$$
DW(X)=\sup\{dw(u,v,t):\,u,v\in S_X,\ 0<t<1\},
$$
where
$$
dw(u,v,t)=\frac{(1+t)\|u-v\|}{\|u-tv\|}.
$$
In Example \ref{dodecahedron} we will use the identity $DW(X)=DW_3(X)$.


The result in Example \ref{hexagon} is known. In \cite{Mi1}, H. Mizuguchi defined the constant
$$
IB(X)=\inf\left\{\frac{\inf_{\lambda\in \mathbb{R}}\|x+\lambda y\|}{\|x\|}: x,y\in X\setminus\{0\},\ \|x+y\|=\|x-y\|\right\},
$$
and probed that for any space $X$, $IB(X)DW(X)=2$. Later,  Mizuguchi\;\cite{Mi2}, after some cumbersome lemmas, showed that if $X$ is a Radon plane (as is the case in Example \ref{hexagon}) then $IB(X)\geq 8/9$, with the equality if and only if the unit sphere is affine to a regular hexagon. This gives $DW(X)=9/4$ in Example~\ref{hexagon}. What we do here is a direct calculation of this value, confirming the validity of Mizuguchi's result\footnote{We must keep in mind that there is a preprint in the web where a different value is given.}.

\begin{exa}\label{hexagon}
Let $X$ be the space $\ell_\infty-\ell_1$, i.e., $\mathbb{R}^2$ endowed with the norm
$$
\|(x_1,x_2)\|=
\begin{cases}
\;\max\{|x_1|,|x_2|\} & \text{if\ \ }
x_1x_2\geq 0,\\[.5ex]
\;|x_1|+|x_2| & \text{if\ \ } x_1x_2\leq 0,
 \end{cases}
$$
whose unit sphere is affine to a regular hexagon. Then $DW(X)=9/4$. Moreover, $DW(X)=dw(u,v,t)$ only when $t=1/2$ and $u$ and $v$
are the points $(1,1/2)$ and  $(0,1)$, or any other couple of points with a
similar position in the unit sphere (that is affine to a regular hexagon).
\end{exa}

{\it Proof.} To compute $DW(X)$ we shall consider several cases according to
the position of $u$ and $v$ over $S_X$. Let $0<t<1$.

\medskip

{\sl Case}\/ 1. Assume that $u=(1,\al)$ and $v=(1,\be)$ with $0\leq\al\leq 1$
and $0\leq\be\leq 1$. Then $\|u-v\|=\|(0,\al-\be)\|=|\al-\be|$ and
$$
\|u-tv\|=\|(1-t,\al-t\be)\|=
\begin{cases}
\max\{1-t,\al-t\be\} & \text{if } \al\geq t\be\\[.5ex]
1+t+t\be-\al & \text{if } \al\leq t\be
\end{cases}
$$

1.1. Assume that $\al\leq\be$. If $\al\geq t\be$,
then $\al-t\be\leq\al+1-t-\be\leq1-t$, and
$$
dw(u,v,t)=\frac{(1+t)(\be-\al)}{1-t}\leq\al+\be\leq 2<\frac{9}{4}.
$$
On the contrary, if $\al\leq t\be$, then
$$
dw(u,v,t)=\frac{(1+t)(\be-\al)}{1-t+t\be-\al}\leq\frac{2(\be-\al)}{1-t+t\be-\al}\leq2<\frac{9}{4}.
$$

1.2. Assume that $\al\geq\be$. Since $\al-t\be\geq
t(\al-\be)\geq 0$, we have $\|u-tv\|=\max\{1-t,\al-t\be\}$. If
$1-t\geq\al-t\be$, then
$$
dw(u,v,t)=\frac{(1+t)(\al-\be)}{1-t}\leq 2-\al-\be\leq 2<\frac{9}{4},
$$
On the contrary, if $1-t\leq\al-t\be$, then
$$
dw(u,v,t)=\frac{(1+t)(\al-\be)}{\al-t\be}\leq 2<\frac{9}{4}.
$$

\medskip

{\sl Case}\/ 2. Assume that $u=(1,\al)$ and $v=(\be,1)$ with $0\leq\al\leq 1$
and $0\leq\be< 1$. Then $\|u-v\|=\|(1-\be,\al-1)\|=|1-\be|+|\al-1|=2-\al-\be$.
Moreover, since $1-t\be>0$,
$$
\|u-tv\|=\|(1-t\be,\al-t)\|=
\begin{cases}
\max\{1-t\be,\al-t\} & \text{ if } \al\geq t,\\[.5ex]
1-t\be+t-\al & \text{ if } \al\leq t.
\end{cases}
$$

2.1. Assume that $\alpha\geq t>0$. Then $1-t\be\geq\al-t\be > \al-t\geq0$, and
$$
dw(u,v,t)=\frac{(1+t)(2-\al-\be)}{1-t\be}=\frac{9}{4}-\frac{f(\al,\be,t)}{4(1-t\be)}.
$$
where $f(\al,\be,t):=(4\al-5\be-8)t+4\al+4\be+1$. Simple calculations show that
$$
f(\al,\be,t)=\Big(\frac{\al-t}{\al}\Big)f(\al,\be,0)+\frac{t}{\al}\Big[(1-\be)f(\al,0,\al)+\be f(\al,1,\al)\Big].
$$
Since
\begin{align*}
f(\al,\be,0) & = 4\al+4\be+1 >0,\\
f(\al,0,\al) & = (2\al-1)^2\geq 0,\\
f(\al,1,\al) & = (1-\al)(5-4\al)>0,
\end{align*}
we have that $f(\al,\be,t)\geq 0$. Moreover, $f(\al,\be,t)=0$ if and only if $t=\al$, $f(\al,0,\al)=0$ and $\be=0$, i.e., $u=(1,1/2)$, $v=(0,1)$, and $t=1/2$.

2.2. Assume that $\al\leq t$. Then
$$
dw(u,v,t)=\frac{(1+t)(2-\al-\be)}{1-t\be+t-\al}=\frac{9}{4}-\frac{g(\al,\be,t)}{4(1-t\be+t-\al)},
$$
where $g(\al,\be,t):=(4\al-5\be+1)t-5\al+4\be+1$. Since,
$$
g(\al,\be,t)=\Big(\frac{t-\al}{t}\Big)g(0,\be,t)+\frac{\al}{t}\big[(1-\be)g(t,0,t)+\be g(t,1,t)\big].
$$
with
\begin{align*}
g(0,\be,t) & = 4\be(1-t)+t(1-\be)+1>0,\\
g(t,0,t) & = (2t-1)^2\geq 0,\\
g(t,1,t) & = (1-t)(5-4t)>0,
\end{align*}
we have that $g(\al,\be,t)\geq 0$. Moreover, $g(\al,\be,t)=0$ if and only if $t=\al$, $g(t,0,t)=0$ and $\be=0$, i.e., again $u=(1,1/2)$, $v=(0,1)$, and $t=1/2$.

\medskip

{\sl Case}\/ 3. Assume that $u=(1,\al)$ and $v=(\be-1,\be)$ with $0\leq\al\leq
1$ and $0\leq\be< 1$. Then $u-v=(2-\be,\al-\be)$, and
$$
\|u-v\|=\left.
\begin{cases}
\max\{2-\be,\al-\be\} & \text{ if }\al\geq \be\\[.5ex]
2-\be+\be-\al & \text{ if }\al\leq \be
\end{cases}
\right\} =
\begin{cases}
2-\be & \text{ if }\al\geq \be,\\[.5ex]
2-\al & \text{ if }\al\leq \be.
\end{cases}
$$
Moreover, $u-tv=(1+t-t\be,\al-t\be)$, and
$$
\|u-tv\|=\left.
\begin{cases}
\max\{1+t-t\be,\al-t\be\} & \text{ if }\al\geq t\be\\[.5ex]
1+t-t\be+t\be-\al & \text{ if }\al\leq t\be
\end{cases}
\right\} =
\begin{cases}
1+t-t\be & \text{ if }\al\geq t\be,\\[.5ex]
1+t-\al & \text{ if }\al\leq t\be.
\end{cases}
$$

3.1. Assume that $\al\geq\be$. Then $\al\geq t\be$, and
$$
dw(u,v,t)=\frac{(1+t)(2-\be)}{1+t-t\be}\leq 2<\frac{9}{4}.
$$

3.2. Assume $\al\leq \be$. If $\al\geq t\be$, then
$$
dw(u,v,t) =\frac{(1+t)(2-\al)}{1+t-t\be}=\frac{9}{4}-\frac{h(\al,\be,t)}{4(1+t-t\be)},
$$
where $h(\al,\be,t):= 4(1+t)\al+t-9t\be+1$. Since,
$$
h(\al,\be,t)\geq h(t\be,\be,t)=(1-\be)(1+t)+4\be\Big(t-\frac{1}{2}\Big)^2>0,
$$
we get that $dw(u,v,t)<9/4$.

On the other hand, asume that $\al\leq t\be$. Then,
$$
dw(u,v,t)=\frac{(1+t)(2-\al)}{1+t-\al}=\frac{9}{4}-\frac{(t-\al)(1+t)+\al(2t-1)^2}{4t(t-\al+1)}.
$$
Since $\al<t$, we get that $dw(u,v,t)<9/4$.
\medskip

{\sl Case}\/ 4. Finally, assume that $u=(1,\al)$ and $v=(-1,-\be)$, with
$0\leq\al\leq 1$ and $0<\be\leq 1$. Then $\|u-v\|=\|(2,\al+\be)\|=2$ and
$\|u-tv\|=\max\{1+t,\al+t\be\}=1+t$. Therefore, $dw(u,v,t)=2<9/4$.\hfill$\Box$


\bigskip


With regards to the following example, it should be noted that the spaces $\ell_2-\ell_\infty$ and $\ell_2-\ell_1$ are dual to each other and that H. Mizuguchi, K.-S. Saito and R. Tanaka \cite{MiSaTa} showed that $DW(\ell_2-\ell_\infty)=2\sqrt{2}$. Newertheless,  it is not known if, in general, $DW(X)=DW(X^*)$.

\begin{exa}\label{l2-l1}
Let $X$ be the space $\ell_2-\ell_1$, i.e.,  $\mathbb{R}^2$ endowed with the norm
$$
\|x\|=
\begin{cases}
\;\|x\|_2 & \text{if\ \ }
x_1x_2\geq 0,\\[.5ex]
\;\|x\|_1 & \text{if\ \ } x_1x_2\leq 0,
 \end{cases}
$$
where $x=(x_1,x_2)$. Then $DW(X)=2\sqrt{2}$. Moreover,
$DW(X)>dw(u,v,t)$ for all $u,v\in S_X$ and $0<t<1$.
\end{exa}

{\it Proof.}
Let $v=(1,0)$ and for $0<t<1$, let $u_t=(t,t-1)$. Then $u_t,v\in
S_X$ and $dw(u_t,v,t)=(1+t)\sqrt{2}$. Therefore,
$DW(X)\geq\sup_{0<t<1}dw(u_t,v,t)=2\sqrt{2}$. Next, we shall see that
$dw(u,v,t)<2\sqrt{2}$ for every $u,v\in S_X$ and $0<t<1$. We can assume without
loss of generality that $u\neq v$, which implies that $\|u-tv\|\neq 0$ for
$0\leq t\leq 1$, and then all the denominators in the forthcoming counts will
be non-null. We shall consider several cases according to the position of $u$
and $v$ on $S_X$. To this aim we shall divide $S_X$ into the four arcs
\begin{align*}
S_1 & =\{(x_1,x_2)\in S_X: x_1\geq 0, x_2\geq 0\},\quad S_2=\{(x_1,x_2)\in S_X: x_1\leq 0, x_2\geq 0\},\\
S_3 & =\{(x_1,x_2)\in S_X: x_1\leq 0, x_2\leq 0\},\quad S_4=\{(x_1,x_2)\in S_X: x_1\geq 0, x_2\leq 0\},
\end{align*}
that the coordinate axes divide $S_X$. In the course of the proof we shall
identify $u,v\in S_X$ with scalars $\al,\be\in[0,1]$, and so $dw(u,v,t)$ with
$dw(\al,\be,t)$. To simplify the notation we shall consider
$\bal=\sqrt{1-\al^2}$ and $\bbe=\sqrt{1-\be^2}$.

Due to the symmetry of $S_X$ with respect to the axes $x_1=x_2$ and $x_1=-x_2$, we can limit the study to the
following six situations.

\medskip

{\sl Case}\/ 1. Assume that $u,v \in S_1$, i.e., $u=(\al,\bal)$,
$v=(\be,\bbe)$. Due to the symmetry properties, we can assume, without loss of generality, that $0\leq\be<\al\leq 1$. Then
$0\leq\bal<\bbe\leq 1$. This implies that $\al\bbe-\be\bal>0$,
$$
\|u-v\|=\|(\al-\be,\bal-\bbe)\|=\al-\be+\bbe-\bal,
$$
and
$$
\|u-tv\|=\|(\al-t\be,\bal-t\bbe)\|=
\begin{cases}
\big((\al-t\be)^2+(\bal-t\bbe)^2\big)^{1/2} & \text{ if\; } \bal-t\bbe\geq 0,\\[1ex]
\al-t\be+t\bbe-\bal & \text{ if\; } \bal-t\bbe\leq 0.
\end{cases}
$$
Therefore,
$$
dw(\al,\be,t)=\begin{cases}
\dfrac{(1+t)(\al-\be+\bbe-\bal)}{\big(1-2(\al\be+\bal\bbe)t+t^2\big)^{1/2}} &
\text{ if\; }
0<t\leq\dfrac{\bal}{\bbe},\\[4ex]
\dfrac{(1+t)(\al-\be+\bbe-\bal)}{\al-\bal+t(\bbe-\be)} & \text{ if\; }
\dfrac{\bal}{\bbe}\leq t<1,
\end{cases}
$$

\smallskip

1.1. Assume that $0<t\leq\bal/\bbe$. Since the function $t\to
1-2(\al\be+\bal\bbe)t+t^2$ is convex and attains the minimum at
$$
t_0=\al\be+\bal\bbe=\frac{\be(\al \bbe-\bal \be)+\bal}{\bbe}\geq\frac{\bal}{\bbe},
$$
we have
$$
dw(\al,\be,t)  \leq
dw\Big(\al,\be,\frac{\bal}{\bbe}\Big)=\frac{(\bal+\bbe)(\al-\be+\bbe-\bal)}{\al\bbe-\be\bal}=
1+\al\bbe+\be\bal+\bal\bbe-\al\be.
$$
Let $0\leq\theta<\phi\leq\pi/2$ be such that $\al=\cos\theta$,
$\be=\cos\phi$. Then $\bal=\sin\theta$, $\bbe=\sin\phi$ and
\begin{equation}\label{Example2.1}
\al\bbe+\be\bal+\bal\bbe-\al\be=\sin(\theta+\phi)-\cos(\theta+\phi)\leq\sqrt{2}.
\end{equation}
Therefore, $dw(\al,\be,t)\leq 1+\sqrt{2}<2\sqrt{2}$.

\smallskip

1.2. On the other hand, assume that $\bal/\bbe\leq t<1$. Then
$$
dw(\al,\be,t) = \frac{(1+t)(\al-\be+\bbe-\bal)}{\al-\bal+t(\bbe-\be)} = 1+\frac{\bbe-\be+t(\al-\bal)}{\al-\bal+t(\bbe-\be)},
$$
and we shall show that for $\bal/\bbe\leq t<1$,
$$
\frac{\bbe-\be+t(\al-\bal)}{\al-\bal+t(\bbe-\be)}\leq\sqrt{2}.
$$
The above is equivalent to seeing that the function
$$
f(t):=\bbe-\be+(\bal-\al)\sqrt{2}+t\big(\al-\bal+(\be-\bbe)\sqrt{2}\big)
$$
is negative for $\bal/\bbe\leq t<1$. Since $f(t)$ is lineal, to do this it suffices to see that $f(1)\leq 0$ and $f(\bal/\bbe)\leq 0$, which is true because
$$
f(1)=(1-\sqrt{2})(\al-\be+\bbe-\bal)<0,
$$
and, from (\ref{Example2.1}),
$$
f\left(\frac{\bal}{\bbe}\right)=\frac{(\al\bbe-\be\bal)(\al\bbe+\be\bal+\bal\bbe-\al\be-\sqrt{2})}{\bbe}\leq
0.
$$

\medskip

{\sl Case}\/ 2. Assume that $u \in S_1$ and $v\in S_2$, i.e., $u=(\al,\bal)$,
$v=(\be-1,\be)$, with $\al,\be\in[0,1]$. Then,
$$
\|u-v\|=\|(\al+1-\be,\bal-\be)\|=
\begin{cases}
\big(\al+1-\be)^2+(\bal-\be)^2\big)^{1/2} & \text{ if\; } \bal\geq\be,\\[1ex]
1+\al-\bal & \text{ if\; } \bal\leq \be,
\end{cases}
$$
and
$$
\|u-tv\|=\|(\al+t-t\be,\bal-t\be)\|=
\begin{cases}
\big((\al+t-t\be)^2+(\bal-t\be)^2\big)^{1/2} & \text{ if\; }\bal\geq t\be,\\[1ex]
t+\al-\bal & \text{ if\; }\bal\leq t\be.
\end{cases}
$$

\smallskip

2.1. Assume that $\bal\geq\be$. Then, $\bal\geq t\be$, and
$$
dw(\al,\be,t)=\frac{(1+t)\big((\al+1-\be)^2+(\bal-\be)^2\big)^{1/2}}{\big((\al+t-t\be)^2+(\bal-t\be)^2\big)^{1/2}}.
$$
Moreover,
$$
\frac{\partial dw(\al,\be,t)}{\partial
t}=\frac{\big((\al+1-\be)^2+(\bal-\be)^2\big)^{1/2}}{\big((\al+t-t\be)^2+(\bal-t\be)^2\big)^{3/2}}\,
g_1(t),
$$
where
$$
g_1(t)=1-\al+\al\be+\bal\be-t\big(1+\bal\be+(\be-1)(\al+2\be)\big).
$$
Since $g_1(t)$ is lineal, $g_1(0)\geq 0$ and $g_1(1)=2\be(1-\be)\geq 0$, it follows that
$t\in[0,1]\to dw(\al,\be,t)$ is a non-decreasing function and then
$dw(\al,\be,t)\leq dw(\al,\be,1)=2<2\sqrt{2}$.

\smallskip

2.2. Assume that $t\be\leq\bal<\be$. Then $\be>0$, and
$$
dw(\al,\be,t)=\frac{(1+t)(1+\al-\bal)}{\big((\al+t-t\be)^2+(\bal-t\be)^2\big)^{1/2}}.
$$
Moreover,
$$
\frac{\partial dw(\al,\be,t)}{\partial
t}=\frac{1+\al-\bal}{\big((\al+t-t\be)^2+(\bal-t\be)^2\big)^{3/2}}\, g_1(t),
$$
As in Case 2.1, the function $t\in[0,1]\to dw(\al,\be,t)$ is non-decreasing and then
$$
dw(\al,\be,t)\leq
dw\Big(\al,\be,\frac{\bal}{\be}\Big)=\frac{(1+\al-\bal)(\bal+\be)}{\bal(1-\be)+\al\be}.
$$
Next, we will show that $dw(\al,\be,\bal/\be)\leq 1+\sqrt{2}<2\sqrt{2}$. This is equivalent to see that
$h(\al,\be):=\al^2-1+\bal(\al-\sqrt{2})+\be(1+\bal\sqrt{2}-\sqrt{2}\al)\leq 0$.
But this follows from $h(\al,0)\leq0$ and $h(\al,1)=\al(\al+\bal-\sqrt{2})\leq
0$.

\smallskip

2.3. Assume that $\bal<t\be$. Then $\bal<t$, $\bal<\be$, and
$$
dw(\al,\be,t)=\frac{(1+t)(1+\al-\bal)}{t+\al-\bal}.
$$
Since
$$
\frac{\partial dw(\al,\be,t)}{\partial t}=\frac{-2\al\bal}{(t+\al-\bal)^2}\leq
0,
$$
we get that $dw(\al,\be,t)\leq dw(\al,\be,\bal)=1+\al+\bal\leq
1+\sqrt{2}<2\sqrt{2}$.

\medskip

{\sl Case}\/ 3. Assume that $u,v \in S_2$, i.e., $u=(\al-1,\al)$,
$v=(\be-1,\be)$, with $\al,\be\in[0,1]$. We can assume without loss of generality that
$\al>\be$, which imply $\al>t\be$. Then,
$\|u-v\|=(\al-\be)\sqrt{2}$ and
$$
\|u-tv\|=
\begin{cases}
\big((\al-1+t-t\be)^2+(\al-t\be)^2\big)^{1/2} & \text{ if }
\al-1+t-t\be\geq 0,\\[2ex]
1-t & \text{ if } \al-1+t-t\be\leq 0.
\end{cases}
$$

\smallskip

3.1. Assume that $\al-1+t-t\be\geq 0$. Then,
$$
dw(\al,\be,t)=\frac{(1+t)(\al-\be)\sqrt{2}}{\big((\al-1+t-t\be)^2+(\al-t\be)^2\big)^{1/2}}.
$$
Let us show that $dw(\al,\be,t)<2\sqrt{2}$, which is equivalent to
see that
$$
g(\al,\be,t):=(1+t)^2(\al-\be)^2-4\big((\al-1+t-t\be)^2+(\al-t\be)^2\big)<0.
$$
Since,
$$
\frac{\partial g(\al,\be,t)}{\partial \al}= 2\big((\al-\be)t^2+(2\al+6\be-4)t-7\al-\be+4\big)
$$
and
$$
\frac{\partial^2 g(\al,\be,t)}{\partial \al^2}= 2(1+t)^2-16 <0,
$$
the function $g$ is concave with respect to $\al$. Moreover, $\frac{\partial g}{\partial \al}(\al_0,\be,t)=0$ for
$$
\al_0=\frac{\be t^2+(4-6\be)t+\be-4}{t^2+2t-7}.
$$
Then,
$$
g(\al,\be,t)\leq g(\al_0,\be,t)=\frac{4(1-t)^2}{7-t^2-2t}f(\be,t),
$$
where
$$
f(\be,t)=(2\be^2-2\be+1)t^2+(4\be^2-4\be+2)t+2\be^2-2\be-3.
$$
To conclude this case, we will see that $f(\be,t)<0$, for $0\leq \be\leq 1$ and $0<t<1$. Since
$$
\frac{\partial f(\be,t)}{\partial t}=\big((2\be-1)^2+1\big)(1+t)>0,
$$
$f(\be,t)$ is strictly increasing in $t$, and then $f(\be,t)<f(\be,1)=8\be(\be-1)\leq 0$.

\smallskip

3.2. Assume that $\al-1+t-t\be\leq 0$. Then
$$
dw(\al,\be,t)=\frac{(1+t)(\al-\be)\sqrt{2}}{1-t}\leq
\frac{(1+t)(\al-\be)\sqrt{2}}{\al-t\be}\leq(1+t)\sqrt{2}<2\sqrt{2}.
$$

\medskip

{\sl Case}\/ 4. Assume that $u\in S_2$ and $v\in S_1$, i.e., $u=(\al-1,\al)$,
$v=(\be,\bbe)$. Then,
$$
\|u-v\|=
\begin{cases}
\big((1-\al+\be)^2+(\al-\bbe)^2\big)^{1/2} & \text{ if }\al\leq \bbe,\\[2ex]
1+\be-\bbe& \text{ if }\al\geq \bbe.
\end{cases}
$$
and
$$
\|u-tv\|=
\begin{cases}
\big((1-\al+t\be)^2+(\al-t\bbe)^2\big)^{1/2} & \text{ if }\al\leq t\bbe,\\[2ex]
1+t(\be-\bbe)& \text{ if }\al\geq t\bbe.
\end{cases}
$$

\smallskip

4.1. Assume that $\al\leq t\bbe$. Then $\al\leq \bbe$, and therefore
$$
dw(\al,\be,t)=\frac{(1+t)\big((1-\al+\be)^2+(\al-\bbe)^2\big)^{1/2}}{\big((1-\al+t\be)^2+(\al-t\bbe)^2\big)^{1/2}}.
$$
If $\be=1$, then $\al=0$, and $dw(0,1,t)=2<2\sqrt{2}$. Then we can assume that $0\leq\be<1$.
To see that $dw(\al,\be,t)<2\sqrt{2}$ we will show that
$f(\al,\be,t):=\big(dw(\al,\be,t)\big)^2<8$. For $0\leq\be<1$ we have
$$
\frac{\partial
f(\al,\be,t)}{\partial\be}=\frac{2\big((1-\al)\bbe+\al\be\big)(1+t)^2(1-t)\big((1-\al)^2+\al^2-t\big)}
{\bbe\big((1-\al+t\be)^2+(\al-t\bbe)^2\big)^2}.
$$
Therefore the sign of $\partial f/\partial \be$ depends on the sign of
$(1-\al)^2+\al^2-t$.

For $0<t\leq(1-\al)^2+\al^2$, the function $\be\rightarrow f(\al,\be,t)$ is non-decreasing, and then

$$
f(\al,\be,t)\leq
f(\al,1,t)=\frac{(1+t)^2\big((2-\al)^2+\al^2\big)}{(1-\al+t)^2+\al^2}.
$$
Next, we will show that $f(\al,1,t)<8$. This is equivalent to see that
$$
h_1(\al,t):=(1+t)^2\big((2-\al)^2+\al^2\big)-8\big((1-\al+t)^2+\al^2\big)<0,
$$
which is true because
\begin{align*}
h_1(\al,t) & = 2\Big(\big(\al(\al-2)-2\big)t^2+(2\al^2+4\al-4)t-7\al^2+6\al-2\Big)\\[1ex]
& < 2\big((2\al^2+4\al-4)t-7\al^2+6\al-2\big)\\[1ex]
& = 2\Big((t-1)\big(7(\al-\tfrac{3}{7})^2+\tfrac{5}{7}\big)-t\big(5(1-\al)^2+1\big)\Big)<0.
\end{align*}

Assume now that $1>t\geq(1-\al)^2+\al^2$, which implies that $\al<1$. Moreover, the function $\be\rightarrow f(\al,\be,t)$ is non-increasing, and then
$$
f(\al,\be,t)\leq
f(\al,0,t)=\frac{2(1+t)^2(1-\al)^2}{(1-\al)^2+(\al-t)^2}.
$$
Next we will show that $f(\al,0,t)<8$. This is equivalent to see that
$$
h_2(\al,t):=(1+t)^2(1-\al)^2-4\big((1-\al)^2+(\al-t)^2\big)<0.
$$
Note that
$$
h_2(\al,t)=(\al^2-2\al-3)t^2+(2\al^2+4\al+2)t-(7\al^2-6\al+3)
$$
is a second degree polynomial in $t$. Since
$$
(2\al^2+4\al+2)^2+4(\al^2-2\al-3)(7\al^2-6\al+3)=32(\al+1)(\al-1)^3<0,
$$
$h_2(\al,t)$ has complex roots in $t$, which implies that $h_2(\al,t)<0$, because $h_2(\al,0)=-7\al^2+6\al-3=-7(\al-\tfrac{3}{7})^2-\tfrac{12}{7}<0$.

\smallskip

4.2. Assume that $t\bbe<\al\leq\bbe$. Then,
$$
dw(\al,\be,t)=\frac{(1+t)\|x-y\|_2}{\|x-ty\|_1}\leq\frac{(1+t)\|x-y\|_2}{\|x-ty\|_2}<2\sqrt{2},
$$
where the last inequality follows from the calculations in the previous case (recall that there we
did not use that $\al\leq t\bbe$, only that $\al\leq\bbe$).

\smallskip

4.3. Assume that $t\bbe\leq\bbe\leq\al$. Then
$$
dw(\al,\be,t)=\frac{(1+t)(1+\be-\bbe)}{1+t(\be-\bbe)}\leq 2<2\sqrt{2},
$$
where the first inequality is equivalent to $(1-t)(1-\be+\bbe)\geq 0$.

\medskip

{\sl Case}\/ 5.  Assume that $u\in S_1$ and $v\in S_3$, i.e., $u=(\al,\bal)$,
$v=(-\be,-\bbe)$. In this case the norm of $u$, $v$, $u-v$ and $u-tv$ coincides
with the Euclidean norm. Since in inner product spaces the Dunkl--Williams
constant is equal to $2$, we get that $dw(\al,\be,t)\leq 2$.

\medskip

{\sl Case}\/ 6.  Assume that $u\in S_2$ and $v\in S_4$, i.e., $u=(\al-1,\al)$,
$v=(1-\be,-\be)$. Then $\|u-v\|=2$, and $\|u-tv\|=1+t$, from which it follows that
$dw(\al,\be,t)=2$. \hspace*{\fill} $\Box$


\bigskip

In the examples below, $DW(X)$ is calculated by using the identity (recall Proposition \ref{Equivalences})
$$
DW(X)=\sup\left\{dw(u,v): u,v\in S_X,\ u+v\neq0  \right\}.
$$
where
$$
dw(u,v):=\frac{\|u+v\|}{\inf\limits_{0<\gamma<\frac{1}{2}}\|\gamma u+(1-\gamma) v\|}.
$$
For this purpose, we use the following lemmas.

\begin{lem}\label{orthogonality}
Let $X$ be a two-dimensional normed linear space whose unit sphere $S_X$ is a polygon, and let $p^-$, $p$ and $p^+$ be three consecutive (clockwise) vertices of $S_X$.  Then $p\perp_B x$ if and only if $(p-p^-)\wedge x\geq0$ and $x\wedge(p^+-p)\geq0$, where $y\wedge z=y_1 z_2-y_2 z_1$.
\end{lem}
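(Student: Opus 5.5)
The plan is to reduce Birkhoff orthogonality at a vertex to a statement about supporting lines. We use the standard fact that $p\perp_B x$ if and only if the line $\{p+\lambda x:\lambda\in\mathbb{R}\}$ does not meet the interior of the unit ball $B_X$, that is, the line supports $B_X$ at $p$. This is immediate from the definition, since $\|p+\lambda x\|\geq 1=\|p\|$ for all $\lambda$ means that no point of the line lies in the open unit ball. For $x=0$ both sides of the equivalence hold trivially, so assume $x\neq0$.

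At the vertex $p$, the supporting lines of the polygon are exactly those whose direction lies between the directions of the two edges $[p^-,p]$ and $[p,p^+]$. Equivalently, the tangent cone of $B_X$ at $p$ is
$$
T=\{p+s(p^--p)+r(p^+-p): s,r\geq 0\},
$$
and, by convexity, $B_X\subset T$. A line through $p$ with direction $x$ misses the interior of $B_X$ if and only if it misses the interior of $T$. Near $p$ the ball coincides with $T$, and the local picture suffices because $B_X$ is convex.

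Now write the condition in terms of wedges. With clockwise orientation, the interior of $B_X$ lies to the right of each directed edge. So $w\wedge(p-p^-)$-type sign conditions describe the side on which the interior lies. The line $p+\mathbb{R}x$ misses $\operatorname{int}T$ exactly when $x$ and $-x$ both lie outside the open cone spanned by $p^--p$ and $p^+-p$. In the plane this happens precisely when $x$ lies in the closed angle between the edge direction $p-p^-$ (the incoming edge continued beyond $p$) and $p^+-p$, or in the opposite angle. Expressed with the wedge product, this closed double angle is the set where $(p-p^-)\wedge x$ and $x\wedge(p^+-p)$ have the same sign (both $\geq0$ or both $\leq0$).

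The main obstacle is reconciling this with the statement, which asks for both wedges to be $\geq0$ rather than merely of the same sign. I would resolve it by verifying the orientation carefully: the condition with both $\leq0$ corresponds to $-x$. I expect a careful check to show either that the convexity of the vertex forces the appropriate sign convention, or that the lemma's condition should be read up to replacing $x$ by $-x$. Birkhoff orthogonality is homogeneous, so $p\perp_B x$ if and only if $p\perp_B(-x)$, and the relevant set of directions is symmetric under $x\mapsto -x$. A concrete test case, such as the square with $p=(1,1)$, will fix which sign convention is correct, and the final write-up will then consist of the two wedge computations for the outward normal cone.
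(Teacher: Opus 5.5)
Your supporting-line argument is correct. It is essentially the paper's proof written out, since the paper's proof is only the remark that the wedge product detects orientation. Put $a=p-p^-$, $b=p^+-p$ and $C=\{sa+rb:\ s,r\geq 0\}$. You rightly get $p\perp_B x$ iff $x\in C\cup(-C)$.

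The point you call the main obstacle cannot be left open, and only your second resolution is possible. At a vertex $a\wedge b\neq 0$, so you can write $x=sa+rb$. Then $(p-p^-)\wedge x=r\,(a\wedge b)$ and $x\wedge(p^+-p)=s\,(a\wedge b)$. The lemma's condition therefore says exactly $x\in C$ when $a\wedge b>0$, and $x\in -C$ when $a\wedge b<0$. That is always one of the two opposite angles, never both: for $x\neq 0$ the condition cannot hold for $x$ and $-x$ simultaneously, since that would force $r=s=0$.

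Interchanging $p^-$ and $p^+$ just swaps the two wedge expressions. So the word \emph{clockwise} plays no role; the paper in fact lists its vertices counterclockwise. No orientation check or test case can make the condition symmetric under $x\mapsto -x$, as $\perp_B$ is.

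Consequently the \emph{only if} half of the lemma, read literally, is false. In $(\mathbb{R}^2,\|\cdot\|_\infty)$ with $p=(1,1)$, the condition reads $x_1\leq 0\leq x_2$ for either labelling of the neighbours. But $p\perp_B x$ iff $x_1x_2\leq 0$, so $x=(1,-1)$ is a counterexample.

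So commit to the corrected statement, which is exactly what your argument proves:
\[
p\perp_B x\iff\big((p-p^-)\wedge x\big)\big(x\wedge(p^+-p)\big)\geq 0 ,
\]
where the product equals $rs\,(a\wedge b)^2$. Then record the \emph{if} direction of the lemma as a corollary. That direction is the only one the paper ever uses: it certifies $p_i\perp_B v-u$ before Lemma~\ref{Lemma} is invoked in Examples~\ref{square} and~\ref{dodecahedron}. Nothing downstream is affected.

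A minor wording point: what you compute at the end is the double cone $\pm C$ of supporting directions at $p$, not an outward normal cone.
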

\begin{proof}
Just keep in mind that the wedge product determines the orientation of the vectors in the plane.
\end{proof}

\begin{lem}\label{Lemma}
If $u,v,x\in S_X$, $\ro>0$ and $\mu\in \mathbb{R}$ are such that $x\perp_B v-u$ and $\ro x=\mu u+(1-\mu) v$, then
$$
dw(u,v)\leq \frac{\|u+v\|}{|\ro|}.
$$
\end{lem}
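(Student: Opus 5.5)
The plan is to show directly that the denominator of $dw(u,v)$ is at least $\ro$, that is,
$$
\inf_{0<\gamma<\frac{1}{2}}\|\gamma u+(1-\gamma)v\|\geq \ro .
$$
Since $\ro>0$ we have $|\ro|=\ro$, so this inequality gives the claim at once (we assume $u+v\neq 0$, so that $dw(u,v)$ is defined). In fact we prove more: every point of the whole line through $u$ and $v$ has norm at least $\ro$. The restriction $0<\gamma<\frac12$ plays no role.

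First, rewrite each point of the line through $u$ and $v$ in terms of $\ro x$. For any $\gamma\in\mathbb{R}$,
$$
\gamma u+(1-\gamma)v-\ro x=(\gamma-\mu)u+(\mu-\gamma)v=(\gamma-\mu)(u-v).
$$
Hence, setting $\la=(\mu-\gamma)/\ro$,
$$
\gamma u+(1-\gamma)v=\ro x-(\gamma-\mu)(v-u)\cdot(-1)=\ro\Big(x+\la(v-u)\Big).
$$
Second, apply the Birkhoff orthogonality $x\perp_B v-u$. By definition it gives $\|x+\la(v-u)\|\geq\|x\|=1$ for every real $\la$, since $x\in S_X$. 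Therefore $\|\gamma u+(1-\gamma)v\|\geq\ro$ for all $\gamma\in\mathbb{R}$, and in particular for all $\gamma\in(0,\frac12)$. Taking the infimum and then dividing $\|u+v\|$ by it yields $dw(u,v)\leq\|u+v\|/\ro=\|u+v\|/|\ro|$.

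There is no real obstacle here. The only point needing care is the sign bookkeeping in the identity $\gamma u+(1-\gamma)v-\ro x=(\gamma-\mu)(u-v)$, and it suffices to check that this identity holds. The hypothesis $\ro>0$ matters in two places: it lets us factor out $\ro$ without changing the direction of the inequality, and it makes $|\ro|=\ro$.
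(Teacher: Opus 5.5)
Your proof is correct and is essentially the paper's argument. You rewrite every point $\gamma u+(1-\gamma)v$ of the line through $u$ and $v$ as $\ro x$ plus a multiple of $v-u$, and then use $x\perp_B v-u$ to get $\inf_{0<\gamma<\frac12}\|\gamma u+(1-\gamma)v\|\geq\ro$. (One small slip: your middle expression $\ro x-(\gamma-\mu)(v-u)\cdot(-1)$ has a stray factor $-1$ and should read $\ro x-(\gamma-\mu)(v-u)$; your final identity with $\la=(\mu-\gamma)/\ro$ is correct, and the sign does not matter anyway since Birkhoff orthogonality quantifies over all real $\la$.)
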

\begin{proof}
Since $x\perp_B v-u$, we have that for all $\la\in \mathbb{R}$,
$$
\|\ro x\|\leq \|\ro x+\la(v-u)\|=\big\|(\mu-\la)u+\big(1-(\mu-\la)\big)v\big\|,
$$
and then, $|\ro|\leq\inf\limits_{0<\gamma<\frac{1}{2}}\|\gamma u+(1-\gamma) v\|$.
\end{proof}

The following example is well known. What we show here is that the supremum defining $DW(X)$ is not attained at any pair of points in $S(X)$.

\begin{exa}\label{square}
Let $X=(\mathbb{R}^2,\|\;\|_\infty)$. Then, $DW(X)=4$. Moreover, $dw(u,v)<4$ for all $u,v\in S_X$, $u+v\neq0$.
\end{exa}
\begin{proof}
We can assume that the $S_X$ is the square with vertices at the points
$$
p_0=(1,1),\quad p_1=(-1,1),\quad p_2=(-1,-1), \quad p_3=(1,-1).
$$
Let us show first that $DW_5(X)\geq 4$. Take $u_t=(1,1-2t)$, with $0<t<1$, and $v=p_1$. Then, $u_t,v\in S_X$ and $\|u_t+v\|=\|(0,2-2t)\|=2-2t$. Moreover, $\|u_t+tv\|=\|(1-t,1-t)\|=1-t$. Then
$$
DW_3(X)\geq\frac{(1+t)\|u_t+v\|}{\|u_t+tv\|}=\frac{(1+t)2(1-t)}{(1-t)}=2(1+t).
$$
Letting $t\rightarrow1$ yields $DW_3(X)\geq 4$.

Let us show now that $dw(u,v)<4$, for all $u,v\in S_X$, $u+v\neq0$. We consider several cases according to the position of $u$ and $v$ in $S_X$. We can assume without loss of generality that $u\in[p_3,p_0]$, i.e., $u=(1,1-2\al)$, with $0\leq\al\leq1$.

\medskip

{\it Case} 1. Assume that $v\in[p_3,p_0]$, i.e., $v=(1,1-2\be)$, with $0\leq\be\leq1$. Then, $\|u+v\|=2$. Moreover, for $0<\ga<\frac{1}{2}$, $\ga u+(1-\ga)v\in[p_3,p_0]$. Therefore, $dw(u,v)=2<4$.

\bigskip

{\it Case} 2. Assume that $v\in[p_0,p_1]$, i.e., $v=(2\be-1,1)$. Then, $u+v=\big(2\be,2(1-\al)\big)$, and $\|u+v\|=\max\{2\be,2(1-\al)\}$. Since $u+v\neq0$ we have that $(\al,\be)\neq(1,0)$, Moreover, we can assume that $u\neq v$, and then, $(\al,\be)\neq(0,1)$. Since, $p_0\perp_B v-u$, and $\ro p_0=\mu u+(1-\mu)v$, with
$$
\mu  = \frac{1-\be}{\al+1-\be},\quad
\ro  = \frac{(1-\al)(1-\be)+\al\be}{\al+1-\be}>0,
$$
by Lemma \ref{Lemma} we have that $dw(u,v)\leq \frac{\|u+v\|}{\ro}$,

\medskip

\noindent 2.1. Assume that $1-\al\geq\be$. Then, $\|u+v\|=2(1-\al)$, and
$$
dw(u,v)\leq \frac{\|u+v\|}{\ro}=\frac{2(1-\al)(\al+1-\be)}{(1-\al)(1-\be)+\al\be}.
$$
Showing that $\frac{\|u+v\|}{\ro}<4$ is equivalent to showing that the function
$$
g(\al,\be):=2(1-\al)(\al+1-\be)-4\big((1-\al)(1-\be)+\al\be\big)
$$
is strictly negative for $\al,\be\in[0,1]$, $(\al,\be)$ different from $(0,1)$ and $(1,0)$. This holds because $g(\al,\be)$ is linear in $\be$, $g(\al,0)=(1-\al)\big(2(1+\al)-4\big)$, and $g(\al,1)=-2\al(\al+1)$.

\medskip

\noindent 2.2. Assume that $1-\al\leq\be$. Then, $\|u+v\|=2\be$. In this case,
$$
g(\al,\be):=2\be(\al+1-\be)-4\big((1-\al)(1-\be)+\al\be\big),
$$
$g(0,\be)=2(1-\be)(\be-2)$, and $g(1,\be)=-2\be^2$.

\bigskip

{\it Case} 3. Assume that $v\in[p_1,p_2]$, i.e., $v=(-1,2\be-1)$. Then $\|u+v\|=\|(0,2(\be-\al)\|=2|\al-\be|$.

\medskip

\noindent 3.1. Assume that $\al+\be-1\geq 0$. Since,
$$
(p_0-p_3)\wedge (v-u)=4>0,\quad\text{and}\quad (v-u)\wedge (p_1-p_0)=4(\al+\be-1)\geq0,
$$
by Lemma \ref{orthogonality} we have that $p_0\perp_B v-u$. Moreover, since $\ro p_0=\mu u+(1-\mu)v$, with
$$
\mu=\frac{\be}{\al+\be},\quad \ro=\frac{\be-\al}{\al+\be},
$$
we have by Lemma \ref{Lemma} that
$$
dw(u,v)\leq \frac{\|u+v\|}{|\ro|}=2(\al+\be)<4,
$$
because $u+v\neq0$.

\medskip

\noindent 3.2. Assume that $\al+\be-1\leq 0$. In this case, $p_1\perp_B v-u$, and the proof follows as in Case 3.1.

\end{proof}

\begin{exa}\label{dodecahedron}
Let $X$ be the two-dimensional space $\mathbb{R}^2$ endowed with a norm whose unit sphere is a regular dodecahedron. Then $DW(X)=8(2-\sqrt{3})$. Moreover, $dw(u,v)<DW(X)$ for all $u,v\in S_X$, $u+v\neq0$.
\end{exa}
\begin{proof}
We will consider that $S_X$ is the regular dodecahedron with vertices
\begin{align*}
p_0 & =(1,0), & p_1 & = (\tfrac{\sqrt{3}}{2},\tfrac{1}{2}), & p_2 &= (\tfrac{1}{2},\tfrac{\sqrt{3}}{2}), & p_3 & =(0,1)\\[.5ex]
p_4 & =(-\tfrac{1}{2},\tfrac{\sqrt{3}}{2}), & p_5 & =(-\tfrac{\sqrt{3}}{2},\tfrac{1}{2}), & p_6 & =(-1,0), & p_7 & =(-\tfrac{\sqrt{3}}{2},-\tfrac{1}{2})\\[.5ex]
p_8 & = (-\tfrac{1}{2},-\tfrac{\sqrt{3}}{2}), & p_9 & =(0,-1), & p_{10} & =(\tfrac{1}{2},-\tfrac{\sqrt{3}}{2}), & p_{11} & = (\tfrac{\sqrt{3}}{2},-\tfrac{1}{2}).
\end{align*}

First, by considering the identity $DW(X)=DW_5(X)$, let us show that $DW(X)\geq 8(2-\rt)$. Let
$$
u_t=\bigg(t,\frac{1-t}{2-\rt}\bigg), \quad \frac{\sqrt{3}}{2}\leq t< 1.
$$
Then, $u_t=\al p_0+(1-\al)p_1$, with $\al=\frac{2t-\rt}{2-\sqrt{3}}\in[0,1]$, which implies $u_t\in S_X$. Let $v=p_6$. Then,
$$
u_t+v=4(1-t)\bigg(\frac{1}{2} p_3+\frac{1}{2}p_4\bigg),
$$
which implies that $\|u_t+v\|=4(1-t)$. Moreover,
$$
u_t+tv=\bigg(\frac{1-t}{2-\rt}\bigg) p_3,
$$
and then, $\|u_t+tv\|=\frac{1-t}{2-\rt}$. Therefore,
$$
DW_5(X)\geq\frac{(1+t)\|u_t+v\|}{\|u_t+tv\|}=(1+t)4(2-\rt)
$$
for all $\frac{\rt}{2}\leq t< 1$, which implies $DW(X)\geq 8(2-\rt)$.


Now, we will compute the value of $dw(u,v)$ for all $u,v\in S_X$, $u+v\neq0$. To do so, we can assume without loss of generality that $u\in[p_0,p_1]$ and $v\in[p_i,p_{i+1}]$, with $i=0,\ldots, 6$. Then,
$$
u=\al p_0+(1-\al)p_1=\bigg(\frac{(2-\sqrt{3})\al+\sqrt{3}}{2},\frac{1-\al}{2}\bigg), \quad 0\leq\al\leq 1,
$$
and $v=\be p_i+(1-\be)p_{i+1}$, $0\leq\be\leq 1$, $i=0,\ldots, 6$. We will therefore refer to $dw(u,v)$ as $dw(\al,\be)$.


As a strategy for the proof, we will frequently express the terms in the equations as a sum of positive components.

\medskip

{\it Case} 1. Let $v\in[p_0,p_1]$, i.e.,
$$
v=\be p_0+(1-\be)p_1=\bigg(\frac{(2-\sqrt{3})\be+\sqrt{3}}{2},\frac{1-\be}{2}\bigg), \quad 0\leq\be\leq 1.
$$
Since
$$
\frac{1}{2}(u+v)=\frac{\al+\be}{2}p_0+\bigg(1-\frac{\al+\be}{2}\bigg)p_1\in[p_0,p_1],
$$
we have that $\|u+v\|=2$. Since $u,v\in[p_0,p_1]$, we have that $\|\ga u+(1-\ga)v\|=1$ for $0\leq\ga\leq1$, and then $dw(\al,\be)=2<8(2-\sqrt{3})$.

\medskip

{\it Case} 2. Let $v\in[p_1,p_2]$, i.e.,
$$
v=\be p_1+(1-\be)p_2=\bigg(\frac{(\rt-1)\be+1}{2},\frac{(1-\rt)\be+\rt}{2}\bigg), \quad 0\leq\be\leq 1.
$$

\noindent 2.1. Assume that $\al+\be\geq 1$. The case $(\al,\be)=(0,1)$ is covered by Case 1. Thus, we may assume that $(\al,\be)\neq(0,1)$. Since, $u+v=\la\big(\de p_0+(1-\de)p_1\big)$, with
\begin{align*}
\de & = \frac{\al+\be-1}{\big(\al+\be-1\big)+\big(1-\al+\be+\rt(1-\be)\big)}\in[0,1],\\[.5ex]
\la & =(2-\rt)\be+\rt>0,
\end{align*}
we have that $\|u+v\|=(2-\rt)\be+\rt$. Since,
$$
(p_1-p_0)\wedge(v-u) =\frac{(2-\rt)(1-\be)}{2}\geq0
$$
and
$$
(v-u)\wedge(p_2-p_1)=\frac{(2-\rt)\al}{2}\geq 0,
$$
by Lemma \ref{orthogonality}, we have that $p_1\perp_B v-u$. Moreover, we have that $\ro p_1=\mu u+(1-\mu)v$, where
$$
\mu  =\frac{1-\be}{\al+1-\be},\quad
\ro  =\frac{f(\al,\be)}{\al+1-\be}>0,
$$
being $f(\al,\be):=(\rt-1)\al+1-\be+(2-\rt)\al\be$. Therefore, by Lemma \ref{Lemma}, we have that
%
%
%
%
%
%
$$
dw(\al,\be)\leq\frac{\|u+v\|}{\ro}=\frac{\big((2-\rt)\be+\rt\big)(\al+1-\be)}{f(\al,\be)}=:g(\al,\be)
$$
Now, let us show that $g(\al,\be)<8(2-\rt)$. Since,
$$
\frac{\partial g(\al,\be)}{\partial \al}=\frac{(2-\rt)(1-\be)^2\big(\rt(1-\be)+2\be\big)}{f(\al,\be)^2}\geq 0,
$$
it follows that $g(\al,\be)$ is monotonically increasing with respect to $\al$, and then
$$
g(\al,\be)\leq g(1,\be)=\frac{(2-\be)\big((2-\rt)\be+\rt\big)}{(1-\rt)\be+\rt}.
$$
Finally, we will see that $g(1,\be)<8(2-\rt)$. This is equivalent to showing that
$$
(\rt-2)\be^2+(21\rt-36)\be-14\rt+24<0,
$$
which is true since that polynomial has complex roots and $-14\rt+24<0$,

\bigskip

\noindent 2.2. Assume that $\al+\be\leq 1$. This case follows by symmetry from Case 2.1, with $u$ and $v$ (and $p_0$ and $p_2$) interchanged.

\bigskip

{\it Case} 3. Let $v\in[p_2,p_3]$, i.e.,
$$
v=\be p_2+(1-\be)p_3=\bigg(\frac{\be}{2},\frac{(\rt-2)\be+2}{2}\bigg), \quad 0\leq\be\leq 1.
$$
Since $u+v=\lambda\big(\de p_1+(1-\de)p_2\big)$, with
\begin{align*}
\de &= \frac{(\rt-1)\al+\be}{\big((\rt-1)\al+\be\big)+\big((1-\al)+(\rt-1)(1-\be)\big)} \in [0,1]\\[.5ex]
\la &= (2-\rt)\be+(\rt-1)\al+\rt-\al>0
\end{align*}
we have that $\|u+v\|=(2-\rt)\be+(\rt-1)\al+\rt-\al$.

\medskip

\noindent 3.1. Assume that $\al+\be\geq1$. Since,
$$
(p_1-p_0)\wedge(v-u)=\frac{(2\rt-3)(1-b)+2-\rt}{2}>0
$$
and
$$
(v-u)\wedge(p_2-p_1)=\frac{(2-\rt)(\al+\be-1)}{2}\geq0,
$$
we have that $p_1\perp_B v-u$. Moreover, we have that $\ro p_1=\mu u+(1-\mu)v$, where
$$
\mu=\frac{\be+\rt(1-\be)}{\be+\rt(1-\be)+\al},\quad \ro=\frac{f(\al,\be)}{\be+\rt(1-\be)+\al},
$$
being $f(\al,\be):=(2-\rt)\al(\rt\be+1)+\rt(1-\be)+\be>0$. By Lemma \ref{Lemma}, we have that
$$
dw(\al,\be)\leq\frac{\|u+v\|}{\ro}=\frac{\big((2-\rt)\be+(\rt-1)\al+\rt-\al\big)\big(\be+\rt(1-\be)+\al\big)}{f(\al,\be)}.
$$
Now, let us show that $\frac{\|u+v\|}{\ro}<8(2-\sqrt{3})$. To this end, we will see that the function
\begin{align*}
g(\al,\be) & := \big((2-\rt)\be+(\rt-1)\al+\rt-\al\big)\big(\be+\rt(1-\be)+\al\big)\\[.5ex]
&\phantom{:=}-8(2-\sqrt{3})f(\al,\be)\\[.5ex]
&\phantom{:}=(\rt-2)a^2+(5-3\rt)b^2+(93-54\rt)ab\\[.5ex]
&\phantom{:=}+(31\rt-53)a+(27\rt-46)b+27-16\rt
\end{align*}
is strictly negative for $\al,\be\in[0,1]$. Since $\frac{\partial^2 g(\al,\be)}{\partial\al^2}=2\rt-4<0$, we have that $g(\al,\be)$ is a concave function with respect to $\al$. Furthermore,
$\frac{\partial g(\al,\be)}{\partial \al}$ vanishes at
$$
\al(\be)=\frac{(54\rt-93)\be+53-31\rt}{2\rt-4}.
$$
Therefore, for all $\al\in \mathbb{R}$, we have that
$$
g(\al,\be)\leq g(\al(\be),\be)=\frac{4682-2703\rt}{4}{b^2}+\frac{1593\rt-2759}{2}b+\frac{817-472\rt}{2}.
$$
Finally, it is straightforward to see that the above polinomial is strictly negative for $0\leq\be\leq1$.

\medskip

\noindent 3.2. Assume that $\al+\be\leq 1$. In this case, $p_2\perp_B v-u$, because
$$
(p_2-p_1)\wedge (v-u)=\frac{(2-\rt)(1-\al-\be)}{2}\geq 0,
$$
and
$$
(v-u)\wedge (p_3-p_2)=\frac{(2\rt-3)\al+2-\rt}{2}>0.
$$
Moreover, $\ro p_2=\mu u+(1-\mu)v$, with
$$
\mu  =\frac{1-\be}{1-\be+(\rt-1)\al+1},\quad \ro=\frac{f(\al,\be)}{1-\be+(\rt-1)\al+1},
$$
where $f(\al,\be):=(2-\rt)\al(\rt\be+1)+\rt(1-\be)+\be>0$ for $\al,\be\in[0,1]$. By Lemma \ref{Lemma},
\begin{align*}
dw(\al,\be) & \leq\frac{\|u+v\|}{\ro}\\[.5ex]
& =\frac{\big((2-\rt)\be+(\rt-1)\al+\rt-\al\big)\big(1-\be+(\rt-1)\al+1\big)}{f(\al,\be)}.
\end{align*}
To show that $\frac{\|u+v\|}{\ro}<8(2-\rt)$ we will see that $g(\al,\be)<0$, for $\al,\be\in[0,1]$, where
\begin{align*}
g(\al,\be) & :=  \big((2-\rt)\be+(\rt-1)\al+\rt-\al\big)\big(1-\be+(\rt-1)\al+1\big)\\[.5ex]
&\phantom{:=\big(} -8(2-\rt)f(\al,\be)\\[.5ex]
&\phantom{:}=(2-3\rt)\al^2+(\rt-2)\be^2+(93-54\rt)\al\be\\[.5ex]
&\phantom{:=}+(33\rt-57)\al+(21\rt-36)\be+24-14\rt.
\end{align*}
In this case, since $\frac{\partial^2 f(\al,\be)}{\partial \be^2}=2\rt-4<0$, it follows that $g(\al,\be)$ is concave with respect to $\be$. Since $\frac{\partial g(\al,\be)}{\partial\be}$ vanishes at
$$
\be(\al):=\frac{(54\rt-93)\al-21\rt+36}{2\rt-4},
$$
we have that
$$
g(\al,\be) \leq g(\al,\be(\al)) = \frac{4682-2703\rt}{4}\al^2+\frac{1110\rt-1923}{2}\al+\frac{798-461\rt}{4},
$$
and it is straightforward to see that the above polynomial is strictly negative for $0\leq\al\leq1$.

\bigskip

{\it Case} 4. Let $v\in[p_3,p_4]$, i.e.,
$$
v=\be p_3+(1-\be)p_4=\bigg(\frac{\be-1}{2},\frac{(2-\rt)\be+\rt}{2}\bigg),\quad 0\leq\be\leq1.
$$

Since,
$$
(p_2-p_1)\wedge(v-u)=\frac{2-\rt}{2}(1-\al)+\frac{2\rt-3}{2}(1-b)\geq 0,
$$
and
$$
(v-u)\wedge(p_3-p_2)=\frac{2\rt-3}{2}\al+\frac{2-\rt}{2}\be\geq 0,
$$
we have that $p_2\perp_B v-u$. Moreover, $\ro p_2=\mu u+(1-\mu)v$, with
\begin{align*}
\mu & = \frac{\be+\rt(1-\be)}{\be+\rt(1-\be)+(\rt-1)\al+1},\\[1ex]
\ro & =\frac{f(\al,\be)}{\be+\rt(1-\be)+(\rt-1)\al+1},
\end{align*}
where $f(\al,\be):=(2-\rt)(2-\al-\be+2\al\be)+2(\rt-1)>0$ for $\al,\be\in[0,1]$.

\noindent 4.1. Assume now that $\al+\be\geq1$. Then, $u+v=\la\big(\de p_1+(1-\de)p_2\big)$, with
\begin{align*}
\de & = \frac{(\rt-1)(\al+\be-1)}{(\rt-1)(\al+\be-1)+(\rt-1)\be+2-\al-\be}\in[0,1],\\[.5ex]
\la &= (2-\rt)(1-\al)+(2\rt-3)\be+1>0.
\end{align*}
Therefore, $\|u+v\|=(2-\rt)(1-\al)+(2\rt-3)\be+1$. By Lemma \ref{Lemma}, we have that $dw(u,v)\leq \frac{\|u+v\|}{\ro}$. Showing that $\frac{\|u+v\|}{\ro}<8(2-\rt)$ is equivalent to showing that the function
\begin{align*}
g(\al,\be) & := \big((2-\rt)(1-\al)+(2\rt-3)\be+1\big)\\[.5ex]
& \phantom{:= \big(}\cdot\big(\be+\rt(1-\be)+(\rt-1)\al+1)\big) - 8(2-\rt)f(\al,\be)\\[.5ex]
& \phantom{:}= (5-3\rt)\al^2+(5\rt-9)\be^2+(62\rt-108)\al\be\\[.5ex]
& \phantom{:=}+(51-29\rt)\al+(65-37\rt)\be+18\rt-32
\end{align*}
is strictly negative for $\al,\be\in[0,1]$. Since $\frac{\partial^2 g(\al,\be)}{\partial \al^2}=10-6\rt<0$, we have that $g(\al,\be)$ is concave with respect to $\al$. Moreover, $\frac{\partial g(\al,\be)}{\al}$ vanishes at
$$
\al(\be)=\frac{(62\rt-108)\be-29\rt+51}{6\rt-10},
$$
and then,
\begin{align*}
g(\al,\be) & \leq g(\al(\be),\be)\\[.5ex]
& =\bigg(\frac{667\rt-1155}{2}\bigg)\be^2+(599-346\rt)\be+\frac{363\rt-629}{4}.
\end{align*}
It is straightforward to see that the above polynomial is strictly negative for $0\leq\be\leq1$.

\bigskip

\noindent 4.2. Assume that $\al+\be\leq1$. Then, $u+v=\la\big(\de p_2+(1-\de)p_3\big)$, with
\begin{align*}
\de & = \frac{(2-\rt)\al+\be+\rt-1}{(2-\rt)\al+\be+\rt-1+(\rt-1)(1-\al-\be)}\in[0,1],\\[.5ex]
\la & = (2\rt-3)(1-\al)+(2-\rt)\be+1>0.
\end{align*}
Therefore, $\|u+v\|=(2\rt-3)(1-\al)+(2-\rt)\be+1$. As in Case 4.1, showing that $\frac{\|u+v\|}{\ro}<8(2-\rt)$ is equivalent to showing that the function
\begin{align*}
g(\al,\be) & := \big((2\rt-3)(1-\al)+(2-\rt)\be+1\big)\\[.5ex]
& \phantom{:= \big(}\cdot\big(\be+\rt(1-\be)+(\rt-1)\al+1)\big) - 8(2-\rt)f(\al,\be)\\[.5ex]
& \phantom{:} = (5\rt-9)\al^2+(5-3\rt)\be^2+(62\rt-108)\al\be\\
& \phantom{:= \big(}+(61-35\rt)\al+(47-27\rt)\be+16\rt-28
\end{align*}
is strictly negative for $\al,\be\in[0,1]$. The proof follows exactly as in Case 4.1, bearing in mind that in this case $\frac{\partial^2 g(\al,\be)}{\partial \al^2}=10\rt-18<0$, $$
\al(\be)=\frac{(62\rt-108)\be-35\rt+61}{18-10\rt},
$$
and
$$
g(\al(\be),\be)=\bigg(\frac{667-385\rt}{2}\bigg)\be^2+(187\rt-324)\be+\frac{307}{4}-\frac{533\rt}{12}<0
$$
for $0\leq\be\leq1$.

\bigskip

{\it Case} 5. Let $v\in [p_4,p_5]$, i.e.,
$$
v=\be p_4+(1-\be) p_5,\quad 0\leq\be\leq1.
$$
Since, $u+v=\la\big(\de p_2+(1-\de) p_3\big)$, with
\begin{align*}
\de & =\frac{(2-\rt)\al+(\rt-1)\be}{(2-\rt)\al+(\rt-1)\be+(\rt-1)(1-\al)+(2-\rt)(1-\be)}\in[0,1],\\[.5ex]
\la & =(2\rt-3)(1-\al+\be)+2(2-\rt) >0,
\end{align*}
we have that $\|u+v\|=(2\rt-3)(1-\al+\be)+2(2-\rt)$.

\medskip

\noindent 5.1. Assume that $\al+\be\geq1$. Since,
$$
(p_2-p_1)\wedge (v-u)=\frac{(2-\rt)\big(1-\al+2(1-\be)\big)+2\rt-3}{2}>0,
$$
and
$$
(v-u)\wedge (p_3-p_2)=\frac{(2\rt-3)(\al+\be-1)}{2}\geq0,
$$
we have that $p_2\perp_B v-u$. Moreover, we have the identity $\ro p_2=\mu u+(1-\mu)v$, with
\begin{align*}
\mu & =\frac{2(1-\be)+\rt\be}{2(1-\be)+\rt\be+(\rt-1)\al+1},\\[.5ex]
\ro & =\frac{f(\al,\be)}{2(1-\be)+\rt\be+(\rt-1)\al+1},
\end{align*}
where $f(\al,\be):=(\rt-1)(1-\al)+(2-\rt)\be+(2\rt-3)\al\be+1>0$. Now, showing that $\frac{\|u+v\|}{\ro}<8(2-\rt)$ is equivalent to showing that the function
\begin{align*}
g(\al,\be) & := \big((2\rt-3)(1-\al+\be)+2(2-\rt)\big)\\[.5ex]
& \phantom{:=}\cdot\big(2(1-\be)+\rt\be+(\rt-1)\al+1\big)-8(2-\rt)f(\al,\be)\\[.5ex]
& \phantom{:}= (5\rt-9)\al^2+(12-7\rt)\be^2+(93-54\rt)\al\be\\[.5ex]
& \phantom{:=}+(19\rt-32)\al+(39\rt-67)\be+27-16\rt
\end{align*}
is strictly negative for $\al,\be\in[0,1]$. Since $\frac{\partial^2 g(\al,\be)}{\partial \al^2}=10\rt-18<0$, we have that $g(\al,\be)$ is concave with respect to $\al$. Moreover, $\frac{\partial g(\al,\be)}{\partial \al}$ vanishes at
$$
\al(\be):=\frac{(54\rt-93)\be-19\rt+32}{10\rt-18},
$$
and then,
\begin{align*}
g(\al,\be) & \leq g(\al(\be),\be)\\[.5ex]
& =\bigg(\frac{2067-1193\rt}{8}\bigg)\be^2+\bigg(\frac{551\rt-955}{4}\bigg)\be+\frac{1371-793\rt}{24}.
\end{align*}
It is straightforward to see that the above polynomial is strictly negative for $0\leq\be\leq1$.

\bigskip

\noindent 5.2. Assume that $\al+\be\leq 1$. Since,
$$
(p_3-p_2)\wedge (v-u)=\frac{(2\rt-3)\big(1-\al-\be\big)}{2}\geq0,
$$
and
$$
(v-u)\wedge (p_4-p_3)=\frac{(2-\rt)(2\al+\be+\rt)}{2}>0,
$$
we have that $p_3\perp_B v-u$. Moreover, we have the identity $\ro p_3=\mu u+(1-\mu)v$, with
\begin{align*}
\mu & = \frac{\be+\rt(1-\be)}{\be+\rt(1-\be)+(2-\rt)\al+\rt},\\[.5ex]
\ro & = \frac{f(\al,\be)}{\be+\rt(1-\be)+(2-\rt)\al+\rt},
\end{align*}
where $f(\al,\be):=(\rt-1)(1-\al)+(2-\rt)\be+(2\rt-3)\al\be+1>0$. Showing that $\frac{\|u+v\|}{\ro}<8(2-\rt)$ is equivalent to showing that the function
\begin{align*}
g(\al,\be) & := \big((2\rt-3)(1-\al+\be)+2(2-\rt)\big)\\[.5ex]
& \phantom{:=}\cdot\big(\be+\rt(1-\be)+(2-\rt)\al+\rt)-8(2-\rt)f(\al,\be)\\[.5ex]
& \phantom{:}= (12-7\rt)\al^2+(5\rt-9)\be^2+(93-54\rt)\al\be\\[.5ex]
& \phantom{:=}+(29\rt-50)\al+(25\rt-43)\be+24-14\rt
\end{align*}
is strictly negative for $\al,\be\in[0,1]$. The proof follows exactly as in Case 5.1, bearing in mind that in this case $\frac{\partial^2 g(\al,\be)}{\partial \al^2}=2(12-7\rt)<0$,
$$
\al(\be)=\frac{(54\rt-93)\be+50-29\rt}{24-14\rt},
$$
and
$$
g(\al(\be)\be)=\bigg(\frac{437\rt-756}{4}\bigg)\be^2+\bigg(\frac{301-174\rt}{2}\bigg)\be+\bigg(\frac{193\rt-336}{12}\bigg)<0
$$
for $0\leq\be\leq1$.

\bigskip

{\it Case} 6. Let $v\in[p_5,p_6]$, i.e.,
$$
v=\be p_5+(1-\be)p_6=\bigg(\frac{(2-\rt)\be-2}{2},\frac{\be}{2}\bigg),\quad 0\leq\be\leq1.
$$
Since, $u+v\neq0$, we have that $(\al,\be)\neq(1,0)$. Since,
$$
(p_3-p_2)\wedge(v-u)= \frac{(2\rt-3)(1-\al)+(4-2\rt)(1-\be)}{2}\geq0
$$
and
$$
(v-u)\wedge(p_4-p_3)=\frac{(4-2\rt)\al+(2\rt-3)\be}{2}\geq0
$$
we have that $p_3\perp_B v-u$. From the identity, $\ro p_3=\mu u+(1-\mu)v$, where
\begin{align*}
\mu & = \frac{2(1-\be)+\rt\be}{2(1-\be)+\rt\be+(2-\rt)\al+\rt},\\[.5ex]
\ro & =\frac{f(\al,\be)}{2(1-\be)+\rt\be+(2-\rt)\al+\rt},
\end{align*}
with $f(\al,\be):=1-\al+(\rt-1)\be+(2-\rt)\al\be>0$, we obtain, as in the other cases, that
\begin{equation}\label{dodecahedron_Case 6}
dw(\al,\be)\leq \frac{\|u+v\|}{\ro}=\frac{\|u+v\|\big(2(1-\be)+\rt\be+(2-\rt)\al+\rt\big)}{f(\al,\be)}.
\end{equation}
Let us now compute $\|u+v\|$.

\medskip

\noindent 6.1. Assume that $\al+\be\geq1$. Since, $u+v=\la\big(\de p_2+(1-\de)p_3\big)$, with
\begin{align*}
\de & =\frac{(2-\rt)(\al+\be-1)}{(2-\rt)(\al+\be-1)+(\rt-1)(1-\al)+(2-\rt)\be}\in[0,1],\\[.5ex]
\la & =(2-\rt)\big(\rt(1-\al)+2\be\big)>0,
\end{align*}
we have that $\|u+v\|=(2-\rt)\big(\rt(1-\al)+2\be\big)$.
From (\ref{dodecahedron_Case 6}), it follows that showing that $\frac{\|u+v\|}{\ro}<8(2-\rt)$ is equivalent to showing that the function
\begin{align*}
g(\al,\be) & := (2-\rt)\big(\rt(1-\al)+2\be\big)\\[.5ex]
&\phantom{:=}\cdot\big(2(1-\be)+\rt\be+(2-\rt)\al+\rt\big)-8(2-\rt)f(\al,\be)\\[.5ex]
&\phantom{:}= (12-7\rt)\al^2+(8\rt-14)\be^2+(31\rt-54)\al\be\\[.5ex]
&\phantom{:=}+(4-2\rt)\al+(54-31\rt)\be+9\rt-16
\end{align*}
is strictly negative for $\al,\be\in[0,1]$. Since $\frac{\partial^2 g(\al,\be)}{\partial\be^2}=16\rt-28<0$, we have that $g(\al,\be)$ is concave with respect to $\be$. Moreover, $\frac{\partial g(\al,\be)}{\partial \be}$ vanishes at
$$
\be(\al):=\frac{(\rt-6)\al+6-\rt}{4}.
$$
Therefore,
$$
g(\al,\be)\leq g(\al,\be(\al))=\bigg(\frac{1-\al}{8}\bigg)\big((296\rt-513)\al+289-168\rt\big)<0,
$$
if $0\leq\al<1$. On the other hand, if $\al=1$, then $\be\neq0$, and  $g(1,\be)=(8\rt-14)\be^2<0$.

\bigskip

\noindent{6.2.}  Assume that $\al+\be\leq1$. Since, $u+v=\la\big(\de p_3+(1-\de)p_4\big)$, with
\begin{align*}
\de & =\frac{(2-\rt)(1-\al)+(\rt-1)\be}{(2-\rt)(1-\al)+(\rt-1)\be+(2-\rt)(1-\al-\be)}\in[0,1]\\[.5ex]
\la & = (2-\rt)\big(2(1-\al)+\rt\be\big)>0,
\end{align*}
we have that $\|u+v\|=(2-\rt)\big(2(1-\al)+\rt\be\big)$. In this case we have,
\begin{align*}
g(\al,\be) & := (2-\rt)\big(2(1-\al)+\rt\be\big)\\[.5ex]
&\phantom{:=}\cdot\big(2(1-\be)+\rt\be+(2-\rt)\al+\rt\big)-8(2-\rt)f(\al,\be)\\[.5ex]
&\phantom{:}= (8\rt-14)\al^2+(12-7\rt)\be^2+(31\rt-54)\al\be\\[.5ex]
&\phantom{:=}+(28-16\rt)\al+(26-15\rt)\be+8\rt-14,
\end{align*}
Since, $\frac{\partial^2 g(\al,\be)}{\partial \al^2}=-4(2-\rt)^2<0$, we hace that $g(\al,\be)$ is concave with respect to $\al$. Moreover, $\frac{\partial g(\al,\be)}{\partial\al}$ vanishes at $\al(\be)=\frac{\rt-6}{4}\be+1$, and then, for $\al,\be\in[0,1]$,
$$
g(\al,\be)\leq g(\al(\be),\be)=\frac{\be}{8}\big((513-296\rt)\be+224-128\rt\big)<0
$$
for $0<\be\leq1$. If $\be=0$, then $\al\neq1$, and in this case we have,
$$
g(\al,0)=(8\rt-14)(1-a)^2<0
$$
for $0\leq\al<1$.

\bigskip

{\it Case} 7. Assume that $v\in[p_6,p_7]$, i.e.,
$$
v=\be p_6+(1-\be)p_7=\bigg(\frac{(\rt-2)\be-\rt}{2},\frac{\be-1}{2}\bigg),\quad 0\leq\be\leq1.
$$
Then,
$$
u+v=(4-2\rt)(\be-\al)\bigg(\frac{1}{2}p_3+\frac{1}{2}p_4\bigg).
$$
By symmetry, and having in mind that $u+v\neq0$, we can assume without loss of generality, that $\al<\be$.
Hence, $\|u+v\|=(4-2\rt)(\be-\al)$.

\medskip

\noindent 7.1. Assume that $\al+\be\geq1$. Since,
$$
(p_3-p_2)\wedge(v-u)=\frac{(2\rt-3)(2-\al-\be)+4-2\rt}{2}> 0,
$$
and
$$
(v-u)\wedge(p_4-p_3)=(2-\rt)(\al+\be-1)\geq0,
$$
we have that $p_3\perp_B v-u$.
Moreover, $\ro p_3=\mu u+(1-\mu)v$, with
\begin{align*}
\mu & =\frac{(2-\rt)\be+\rt}{(2-\rt)\be+\rt+(2-\rt)\al+\rt},\\[.5ex]
\ro & = \frac{\be-\al}{(2-\rt)\be+\rt+(2-\rt)\al+\rt}>0.
\end{align*}
Then,
\begin{align*}
dw(\al,\be) & \leq \frac{\|u+v\|}{\ro}\\[.5ex]
& =\frac{(4-2\rt)(\be-\al)\big((2-\rt)\be+\rt+(2-\rt)\al+\rt\big)}{(\be-\al)}\\[.5ex]
& = (4-2\rt)\big((2-\rt)\be+\rt+(2-\rt)\al+\rt\big)\\[.5ex]
& = 8(2-\rt)+(14-8\rt)(\al+\be-2) < 8(2-\rt),
\end{align*}
because $\al$ and $\be$ cannot be simultaneously equal to $1$.

\medskip

\noindent 7.2. Assume that $\al+\be\leq1$. Since,
$$
(p_4-p_3)\wedge(v-u)=(2-\rt)(1-\al-\be)\geq 0,
$$
and
$$
(v-u)\wedge(p_5-p_4)=\frac{(2\rt-3)(\al+\be)+4-2\rt}{2}>0,
$$
we have that $p_4\perp_B v-u$. Moreover, $\ro p_4=\mu u+(1-\mu)v$, with,
\begin{align*}
\mu & = \frac{2(1-\be)+\rt\be}{2(1-\be)+\rt\be+2(1-\al)+\rt\al},\\[.5ex]
\ro & = \frac{\be-\al}{2(1-\be)+\rt\be+2(1-\al)+\rt\al}>0.
\end{align*}
Then
\begin{align*}
dw(\al,\be)= & \leq \frac{\|u+v\|}{\ro}\\[.5ex]
 & = \frac{(4-2\rt)(\be-\al)\big(2(1-\be)+\rt\be+2(1-\al)+\rt\al\big)}{\be-\al}\\[.5ex]
 & = 8(2-\rt)-(14-8\rt)(\al+\be)<8(2-\rt),
\end{align*}
because $\al$ and $\be$ cannot be simultaneously zero.
\end{proof}

{\it Questions and remarks.} 1. The examples below show that the supremum in $DW(X)$ is attained at a pair of points only if $S_X$ is affine to a regular hexagon. Can this be related to the fact that, in such a case, the norm is a Radon norm?

2. In all the examples we know, $DW(X)=DW(X^*)$. Is this true in general?


\bigskip

\noindent Javier Alonso,

\noindent University of Extremadura (Retired),

\noindent 06071 Badajoz (Spain),

\noindent jalonsoro@gmail.com
\medskip

\noindent Pedro Mart\'{\i}n,

\noindent University of Extremadura,

\noindent 06071 Badajoz (Spain),

\noindent pjimenez@unex.es

\end{document}